%% file: ex_article.tex
\documentclass[final]{siamart250211}

\input{ex_shared}

\usepackage{enumerate,amsmath,amssymb,graphicx,subcaption}
\newtheorem{example}{Example}
\newtheorem{assumption}{Assumption}
\makeatletter
\newsavebox{\@brx}
\newcommand{\llangle}[1][]{\savebox{\@brx}{\(\m@th{#1\langle}\)}%
  \mathopen{\copy\@brx\kern-0.5\wd\@brx\usebox{\@brx}}}
\newcommand{\rrangle}[1][]{\savebox{\@brx}{\(\m@th{#1\rangle}\)}%
 \mathclose{\copy\@brx\kern-0.5\wd\@brx\usebox{\@brx}}}

\ifpdf
\hypersetup{
  pdftitle={An Example Article},
  pdfauthor={D. Doe, P. T. Frank, and J. E. Smith}
}
\fi

\begin{document}

\maketitle

\begin{abstract}
In this paper,  we  consider  the  integrating factor midpoint method  for wave-type equations
 and derive optimal order a posteriori error estimates. We first introduce  an
  integrating factor midpoint approximation defined  by the piecewise linear approximate solutions,
   and derive suboptimal order residual-based error estimates by employing the energy technique.
  Hence the key is to introduce a continuous, piecewise quadratic time reconstruction to establish
   optimal order error bounds. Based on the reliable  a posteriori error control,
   we develop an adaptive time-stepping strategy. Numerical examples are implemented to
    verify the convergence rate of the a posteriori error estimator and the high efficiency of
    the adaptive algorithm.
\end{abstract}

\begin{keywords}
second order evolution equations, a posteriori error control, integrating factor  midpoint method,  reconstruction, adaptivity
\end{keywords}

\begin{MSCcodes}
65M50, 65M15
\end{MSCcodes}

\section{Introduction}
A posteriori error estimates utilize discrete solutions and the data of problems to provide
 computable error bounds, thereby facilitating adaptive computations. Adjusting the mesh and
  time step-size  is  essential for  optimizing  computational costs.  Since Babu\v{s}ka and
  Rheinboldt \cite{Babuska1978} pioneered  adaptive finite element methods through  rigorous
  mathematical derivations,  a posteriori error analysis  has garnered significant attention
  for both elliptic problems and evolution equations discretized in space or
  time (see, e.g., \cite{Bank1985,Verfurth1994,Eriksson1995,Ainsworth1997,Nochetto2000,Makridakis2003,Chen2004,Lakkis2006}). Particularly over the past two decades,  research has intensified on a posteriori error estimates and adaptivity of time-stepping methods applied to evolution equations.
  For instance, reconstruction techniques have been used to derive optimal order
   a posteriori error bounds  for  classical (non-exponential) numerical
    methods (see, e.g., \cite{Akrivis2006,Akrivis2010,Akrivis2011}) for parabolic equations.
     In \cite{Ern2015,Ern2017}, a posteriori error analysis for discontinuous
     Galerkin methods for parabolic problems was established through an
     equilibrated flux approach. Optimal order a posteriori error estimates
      for finite element methods,  leap-frog and cosine methods, and Runge--Kutta
       discontinuous Galerkin methods for abstract  second order evolution problems
       were derived in \cite{Huang2013,Georgoulis2016,Georgoulis2024}.
       The reconstruction technique has also been extended  to delay
        differential equations \cite{Wang2018,Wang2020,Wang2022b}, and adaptive
        algorithms were  further developed  for parabolic and fractional parabolic
         problems \cite{Wang2021,Wang2022a,Cao2025}; the latter specifically addresses
          the nonlocality and singularity of fractional derivatives.

  Exponential integrators have been developed and applied in various  fields due to their favorable
   convergence properties and stability (see, e.g.,
   \cite{Cox2002,Hochbruck2005,Kassam2005,Calvo2006,Celledoni2009,Celledoni2011,Bao2014,Li2016,Wang2016,Ostermann2018}).
   Hochbruck and Ostermann \cite{Hochbruck2010}  presented  a nice review about the construction and
   analysis of exponential integrators, and for methods to compute the products of matrix
    exponentials and vectors, we refer to \cite{Moler2003,Higham2008,Al-Mohy2011}.  In  recent years,
     maximum bound principle preserving  exponential   time-differencing schemes (ETD) \cite{Du2021},
       exponential cut-off methods \cite{Li2020}, and integrating factor Runge--Kutta methods
       \cite{Ju2021} were formulated for the Allen--Cahn equation.  Symmetric exponential-type
       integrators were constructed for highly oscillatory Hamiltonian systems (see, e.g., \cite{Wang2023,Bao2024}).
       We note that the variable step-size ETD method has been applied to  semilinear problems through
        error estimates of the interpolated nonlinear term \cite{Calvo2008}, and adaptive Krylov subspace methods
         for exponential integrators were considered in \cite{Niesen2012,Gaudreault2018,Bergermann2024}.
         However, there has been limited literature devoted to the a posteriori error analysis for  exponential integrators
          for evolution equations,  with the exception of  our recent work \cite{Hu2025a,Hu2025b}, which exclusively addresses
          first order differential equations.

The integrating factor  (IF) method,  also known as the Lawson-type method, introduces an appropriate
 change of variables to the original system, and then applying a numerical scheme to the transformed
 system.  Owing to its conceptual simplicity and straightforward implementation, IF methods have
  sustained considerable attention (see, e.g., 
  \cite{Lawson1967,Krogstad2005,Mei2017,Isherwood2018,Ostermann2020,Li2021,Feng2024}).
   Classical (non-stiff) order conditions of IF Runge--Kutta methods for first order differential
    systems were analyzed in \cite{Berland2005}, and  the convergence of Lawson-type methods
    was considered in \cite{Hochbruck2020}.  Motivated by \cite{Akrivis2006} and \cite{Georgoulis2016},
    this work is concerned with the  a posteriori error  analysis for the IF
    midpoint method for the following linear second order hyperbolic equation
\begin{equation}\label{linearhyperbolicequation}
\left\{
\begin{aligned}
u^{\prime\prime}(t)+Au(t)&=f(t),\quad 0\leq t\leq T,\cr\noalign{\vskip1.5truemm}
 u(0)&=u^0,\cr\noalign{\vskip1.5truemm}
  u^{\prime}(0)&=v^0,\cr\noalign{\vskip1.5truemm}
\end{aligned}
\right.
\end{equation}
where  $A$ is a positive definite, self-adjoint, and linear operator on a  Hilbert space
  $(H,\langle \cdot,\cdot \rangle)$ with  $D(A)$ dense in $H$,  $f:[0,T]\rightarrow H$,
  and  $u^0$, $v^{0}\in H$.  For the second order method,   we begin by introducing  the continuous
    piecewise linear  approximation $(U,V)$ to $(u,u^{\prime})$,  and derive suboptimal error bounds
     in the $L^{\infty}$-in-time and energy-in-space norms for  $u-U$ and $u^{\prime}-V$.
     To recover the optimal order, we then  introduce a piecewise quadratic time
     reconstruction $(\hat{U},\hat{V})$, and establish optimal order  residual-based error estimates
     for $u-\hat{U}$ and $u^{\prime}-\hat{V}$. It should be emphasized that error bounds derived in
      this paper are  explicitly computable and do not rely on specialized techniques for evaluating
      the products of matrix exponentials and vectors, in contrast to the methods proposed
      in \cite{Niesen2012,Gaudreault2018,Bergermann2024}. Furthermore, the adaptive time-stepping
      scheme is constructed using the reliable a posteriori error estimate. Numerical results
      demonstrate the reliability of the a posteriori error estimator and the effectiveness of our adaptive
       algorithm.

The rest of this paper is organized as follows. In Section $2$,
we introduce the IF midpoint approximation and conduct the suboptimal
 a posteriori error analysis. Section $3$ is devoted to a piecewise quadratic time reconstruction
 of this approximation, and derives optimal order a posteriori error bounds. In Section $4$,
 we develop an adaptive time-stepping method. Several numerical examples are carried out  to
 verify the theoretical results in  Section $5$. The final section provides  concluding remarks.

\section{IF midpoint method for linear problems}
Denoting $v:=u^{\prime}$, the linear hyperbolic equation \eqref{linearhyperbolicequation}  can be written as the following first order system
\begin{equation}\label{first-order-system}
\begin{aligned}& \left(
                   \begin{array}{c}
                     u(t)\\
                      v(t)
                   \end{array}
                 \right)
^{\prime}+ \left(
    \begin{array}{cc}
      0& -I\\
       A&0
    \end{array}
  \right)\left(
                   \begin{array}{c}
                    u(t)\\
                      v(t)
                   \end{array}
                 \right)=
\left(                                                         \begin{array}{c}                                             0\\
f(t)                               \end{array}
 \right).
\end{aligned}
\end{equation}
Let us denote $z=(u(t),v(t))^{\intercal}$. Then the system  \eqref{first-order-system} can be simplified into
\begin{equation}\label{linearproblem}
  z^{\prime}(t)+Mz(t)=\bar{f}(t),
\end{equation}
with $\bar{f}(t)=(0,f(t))^{\intercal}$, and
\[M=
\left(
\begin{array}
[c]{cc}
    0& -I\\
       A&0\\
\end{array}
\right).
\]
The domain of the linear operator $M$ is given by $D(M)=D(A)\times D(A^{1/2})$. We define $X=D(A^{1/2})\times H$ with the norm $\|\cdot\|_{X}$, and $\|\cdot \|_{X\leftarrow X}$ denotes the operator norm on $X$.
Our analysis is based on the following classical assumption, we refer to \cite{Evans1998,Hochbruck2010}.
\begin{assumption}\label{assum1}
The linear operator $-M$ generates a $C_0$-semigroup $\{e^{-tM}\}_{t \geq 0}$ on $X$, and there exists a constant $C>0$ such that
\begin{equation*}
\|e^{-tM}\|_{X\leftarrow X}  \leq C, \quad \forall t\geq0.
\end{equation*}
\end{assumption}

\subsection{IF midpoint approximation}\label{sec:IFMD for hyperbolic equations}
Let us consider a partition $0=t^{0}<t^{1}<\cdots <t^{N}=T$ of the interval $[0,T]$ with $J_{n}:=(t^{n-1},t^{n}]$, and $k_{n}:=t^{n}-t^{n-1}$. For given $\{v^{n}\}_{n=0}^N$, we will use the following notation
\begin{equation*}
\bar{\partial} v^n:=\frac{v^n-v^{n-1}}{k_n},\quad  v^{n-\frac{1}{2}}:=\frac{v^n+v^{n-1}}{2}, \quad  n=1,\ldots,N,
\end{equation*}
Applying the second-order IF method (see \cite{Celledoni2008}, Example $3.2$) to the system \eqref{first-order-system}, the nodal approximations $Z^{n}\in D(M)$ to the values $z(t^n)$ are obtained by
\begin{equation}\label{IFmethod}
  Z^{n}=e^{-k_nM}Z^{n-1}+k_ne^{-\frac{1}{2}k_nM}\bar{f}(t^{n-\frac{1}{2}}), \quad n=1,\ldots,N,
\end{equation}
with $Z^{0}=(U^{0},V^{0})^{\intercal}\in D(M)$ approximating $(u^{0},v^{0})^{\intercal}$, which implies that
\begin{equation*}
\begin{aligned}& \left(
                   \begin{array}{c}
                    U^{n}\\
                      V^{n}
                   \end{array}
                 \right)
= e^{-k_nM}\left(
    \begin{array}{c}
  U^{n-1}\\
  V^{n-1}
    \end{array}
  \right)+k_ne^{-\frac{1}{2}k_nM}
\left(                                                \begin{array}{c}                                             0\\
 f(t^{n-\frac{1}{2}})          \end{array}
 \right),  \quad n=1,\ldots,N.
\end{aligned}\end{equation*}
It is simple to verify that the method  \eqref{IFmethod} is symmetric by the changes $k_n\leftrightarrow -k_n$ and $n\leftrightarrow n-1$.   Celledoni, Cohen, and Owren \cite{Celledoni2008} formulated symmetric exponential integrators for the cubic Schr\"{o}dinger equation, and illustrated that
the IF midpoint method preserves the symmetry  and $L^2$-norm of the system. To better facilitate  theoretical analysis,
 the $\varphi$-functions  are defined by the entire functions
 \begin{equation*}
\varphi_l(z)=\int_{0}^{1} e^{(1-\tau)z}\frac{\tau^{l-1}}{(l-1)!}d\tau, \quad l\in \mathbb{N}^{+},
\end{equation*}
and satisfy the recurrence relation
\begin{equation*}
  \varphi_{l+1}(z)=\frac{\varphi_{l}(z)-\varphi_l(0)}{z},\quad \varphi_{0}(z)=e^{z}, \quad \varphi_{l}(0)=\frac{1}{l!}.
\end{equation*}
Using the relation  $e^{-k_nM}=I-k_n\varphi_1(-k_nM)M$, we have
\begin{equation*}
 Z^{n}=Z^{n-1}-k_n\varphi_1(-k_nM)MZ^{n-1}+k_ne^{-\frac{1}{2}k_nM}\bar{f}(t^{n-\frac{1}{2}}), \quad n=1,\ldots, N.
 \end{equation*}
Hence
 \begin{equation}\label{IFMM1}
\bar{\partial}Z^{n}=-\varphi_1(-k_nM)MZ^{n-1}+e^{-\frac{1}{2}k_nM}\bar{f}(t^{n-\frac{1}{2}}), \quad n=1,\ldots,N.
 \end{equation}

For the IF midpoint method with order two,  it is natural to introduce the  approximation
$Z=(U,V)^{\intercal}:[0,T] \rightarrow D(M)$ to $z$, defined by  linearly   interpolating between  the nodal values $Z^{n-1}$ and $Z^{n}$,
\begin{equation}\label{linearinterpolation}
  Z(t)=Z^{n-1}+(t-t^{n-1})\bar{\partial}Z^{n}, \quad t\in J_{n},
\end{equation}
which means that
\begin{equation*}
\begin{aligned}& \left(
                   \begin{array}{c}
                    U(t)\\
                      V(t)
                   \end{array}
                 \right)
= \left(
    \begin{array}{c}
  U^{n-1}\\
  V^{n-1}
    \end{array}
  \right)+(t-t^{n-1})
\left(                                                \begin{array}{c}                                             \bar{\partial} U^{n}\\
\bar{\partial} V^{n}          \end{array}
 \right), \quad t\in J_{n}.
\end{aligned}\end{equation*}
Let $R(t)=(R_{U},R_{V})^{\intercal}\in X$ be  the residual of the approximation $Z$,
\begin{equation}\label{initial residual}
  R(t):=Z^{\prime}(t)+MZ(t)-\bar{f}(t), \quad t \in J_{n},
\end{equation}
which can be considered as  the amount by which the approximation solution $Z$ misses
satisfying \eqref{first-order-system}. In view of \eqref{IFMM1} and \eqref{linearinterpolation}, the residual $R(t)$ can be written as
\begin{equation}\label{linear residual}
\begin{aligned}
  R(t)&=\bar{\partial}Z^{n}+M\big(Z^{n-1}+(t-t^{n-1})\bar{\partial} Z^{n}\big) -\bar{f}(t)\cr\noalign{\vskip2truemm}
  &=\big(I-\varphi_1(-k_nM)\big)MZ^{n-1}+(t-t^{n-1})M\bar{\partial}Z^{n}+(e^{-\frac{1}{2}k_nM}-I)\bar{f}(t^{n-\frac{1}{2}}) \cr\noalign{\vskip2truemm}
  &\quad +\bar{f}(t^{n-\frac{1}{2}})-\bar{f}(t), \quad t\in J_{n},
\end{aligned}
\end{equation}
i.e.,
\begin{equation*}
\begin{aligned}
\left(
                   \begin{array}{c}
                    R_U(t)\\
                    R_V(t)
                   \end{array}
                 \right)
&= (I-\varphi_1(-k_nM))M\left(
    \begin{array}{c}
  U^{n-1}\\
  V^{n-1}
    \end{array}
  \right)+(t-t^{n-1})M
\left(                                                \begin{array}{c}                                             \bar{\partial} U^{n}\\
\bar{\partial} V^{n}          \end{array}
 \right) \cr\noalign{\vskip1.5truemm}
 &\quad +(e^{-\frac{1}{2}k_nM}-I)
 \left(                                                \begin{array}{c}                                             0\\
f(t^{n-\frac{1}{2}})        \end{array}
 \right)+
 \left(                                                \begin{array}{c}                                             0\\
f(t^{n-\frac{1}{2}}) -f(t)      \end{array}
 \right)
 .
\end{aligned}\end{equation*}

Under Assumption \ref{assum1}, we define the following graph norm
\begin{equation*}
  \|z\|_{D(M)}:=\|z\|_{X}+\|Mz\|_{X}, \quad \forall z \in D(M),
\end{equation*}
and the following estimate holds for all $z\in D(M)$,
\begin{equation*}
\begin{aligned}
  \|(e^{-tM}-I)z\|_{X}&=\Big\|-\int_{0}^{t} e^{-sM}Mzds\Big\|_{X} \leq \int_{0}^{t} \| e^{-sM}Mz\|_{X} ds \cr\noalign{\vskip2truemm}
  &\leq Ct\|Mz\|_X.
  \end{aligned}
\end{equation*}
Hence, we have
\begin{equation*}
\begin{aligned}
  \|e^{-tM}-I\|_{X\leftarrow D(M)}:&=\sup_{\substack{z \in D(M) \\ z \neq 0}} \frac{\|(e^{-tM}-I)z\|_{X}}{\|z\|_{D(M)}} \leq \sup_{\substack{z \in D(M) \\ z \neq 0}}  Ct
  \frac{\|Mz\|_X}{\|z\|_{D(M)}}\leq Ct.
  \end{aligned}
\end{equation*}
Based on the operator norm estimates previously established, it is straightforward to demonstrate that
\begin{equation}\label{exponential property1}
 \|e^{-\frac{1}{2}k_nM}-I\|_{X\leftarrow D(M)}=\mathcal{O}(k_n).
\end{equation}
Through similar derivations leveraging the integral representation, we have the corresponding estimate for the $\varphi_1$-function:
\begin{equation}\label{exponential property2}
\|I-\varphi_1(-k_nM)\|_{X\leftarrow D(M)}=\mathcal{O}(k_n).
\end{equation}
From \eqref{exponential property1} and \eqref{exponential property2}, it follows that the residual $R(t)$ is of suboptimal (first) order.

\subsection{Suboptimal a posteriori error analysis}\label{sec:suboptimal error estimates}
For $\Phi=(\phi_1,\phi_2)$, ${E}=(E_1,E_2)$ $ \in D(A^{1/2}) \times H$, let us define
the bilinear form
\begin{equation*}
  \llangle \Phi^{\intercal},\mathcal{E}^{\intercal} \rrangle := \langle A^{1/2}\phi_1,A^{1/2}E_1 \rangle+\langle \phi_2,E_2\rangle.
\end{equation*}
 Obviously, $\llangle \cdot,\cdot \rrangle$ defines an energy inner product on $[D(A^{1/2})\times H]^2$, and its induced energy norm is denoted by  $|\Vert \cdot \Vert|$,
\begin{equation*}
  |\Vert \Phi \Vert|=\big(\Vert A^{1/2} \phi_1 \Vert^2+\Vert \phi_2 \Vert^2\big)^{1/2}.
\end{equation*}

\begin{theorem}\label{suboptimal error bound}
Let $u$ be the exact solution of \eqref{linearhyperbolicequation}, $z=(u,v)^{\intercal}$ satisfies \eqref{linearproblem}, and  $Z=(U,V)^{\intercal}$ be the approximation to $z$ defined by \eqref{linearinterpolation}.  Denoting the error by $e:=z-Z=(e_U,e_V)^{\intercal}$, for  any   $0<\theta<1/2$, the following a posteriori error estimate holds:
\begin{equation*}
\begin{aligned}
 \displaystyle & \max_{t\in[0,t^{N}]}|\Vert (e_U,e_V)(t)\Vert|^2\cr\noalign{\vskip0.2truemm}
  \leq  & \ \frac{1}{1-2\theta}|\Vert (e_U,e_V)(0)\Vert|^2 +
 \frac{1}{2\theta-4\theta^2} \Big( \int_{0}^{t^{N}} |\Vert ({R}_U,R_{V}) (s)\Vert|ds\Big)^2.
 \end{aligned}
\end{equation*}
\end{theorem}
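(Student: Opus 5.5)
The plan is the standard energy argument for the error equation. Subtracting \eqref{initial residual} from \eqref{linearproblem} gives, on each $J_n$,
\begin{equation*}
e'(t)+Me(t)=-R(t),\quad\text{i.e.}\quad e_U'-e_V=-R_U,\qquad e_V'+Ae_U=-R_V .
\end{equation*}
We test this system with $e$ in the energy inner product $\llangle\cdot,\cdot\rrangle$. That is, we pair the first equation with $Ae_U$ (in the form $\langle A^{1/2}\cdot,A^{1/2}e_U\rangle$) and the second with $e_V$. The key structural fact is that $M$ is skew with respect to $\llangle\cdot,\cdot\rrangle$:
\begin{equation*}
\llangle Me,e\rrangle=-\langle A^{1/2}e_V,A^{1/2}e_U\rangle+\langle Ae_U,e_V\rangle=0 .
\end{equation*}
Hence
\begin{equation*}
\tfrac12\tfrac{d}{dt}|\Vert e\Vert|^2=-\llangle R,e\rrangle\le |\Vert R\Vert|\,|\Vert e\Vert| .
\end{equation*}
For rigor one should note that $Z$ is continuous and piecewise linear with values in $D(M)$. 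The identity therefore holds a.e. on each $J_n$, and $|\Vert e\Vert|^2$ is absolutely continuous. If the exact solution is only a mild solution, one argues by density in $D(M)$, using Assumption~\ref{assum1}.

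Next, we integrate from $0$ to an arbitrary $t\in[0,t^N]$ and bound by the maximum. Let $t^*$ be a point where $|\Vert e(t)\Vert|$ attains its maximum $E$ on $[0,t^N]$; it exists by continuity. Writing $\mathcal R:=\int_0^{t^N}|\Vert R(s)\Vert|\,ds$, we get
\begin{equation*}
E^2\le |\Vert e(0)\Vert|^2+2E\,\mathcal R .
\end{equation*}
To obtain exactly the constants of the theorem, we apply Young's inequality with the parameter $\theta$:
\begin{equation*}
2E\mathcal R\le 2\theta E^2+\frac{1}{2\theta}\mathcal R^2 .
\end{equation*}
This gives $(1-2\theta)E^2\le |\Vert e(0)\Vert|^2+\frac{1}{2\theta}\mathcal R^2$. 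Dividing by $1-2\theta>0$ (here we use $\theta<1/2$) yields the coefficient $\frac{1}{2\theta(1-2\theta)}=\frac{1}{2\theta-4\theta^2}$ in front of $\mathcal R^2$, and $\frac{1}{1-2\theta}$ in front of the initial error, matching the statement.

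The main (mild) obstacle is justifying the skew-symmetry and the differentiation of $|\Vert e\Vert|^2$ with the available regularity. Since $Z^n\in D(M)$ and $R(t)\in X$, all pairings are well defined on each open interval $J_n$. The jumps in $Z'$ at the nodes do not affect the integrated identity, because $e$ itself is continuous. Everything else in the argument is the elementary Young/maximum argument above.
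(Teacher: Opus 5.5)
Your proposal is correct and follows the paper's proof: the energy identity $\tfrac12\tfrac{d}{dt}|\Vert e\Vert|^2=-\llangle R,e\rrangle$ from the skew-symmetry of $M$ (self-adjointness of $A$), then Cauchy--Schwarz, integration up to the time where $|\Vert e\Vert|$ is maximal, and Young's inequality with parameter $\theta$, which gives exactly the constants $\frac{1}{1-2\theta}$ and $\frac{1}{2\theta-4\theta^2}$. Your remarks on regularity (a.e.\ differentiability on each $J_n$, continuity of $e$ across the nodes) are extra care that the paper leaves implicit.
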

\begin{proof}
Subtracting  \eqref{initial residual} from \eqref{linearproblem}, we have the following error equation
\begin{equation*}
e^{\prime}(t)+Me(t)=-R(t),
\end{equation*}
and an easy computation gives
\begin{equation}\label{linearerrorequation}
\begin{aligned}
\left(    \begin{array}{c}
                    {e}_U\\
                     {e}_V
                   \end{array}
                 \right)^{\prime}
                 +
     \left(    \begin{array}{c}
                   - {e}_V\\
                     Ae_U
                   \end{array}
                 \right)
=-
\left(    \begin{array}{c}
                    R_U\\
                    R_V
                   \end{array}
                 \right).
\end{aligned}\end{equation}
We take the inner product of  \eqref{linearerrorequation} with $(e_{U},e_{V})^{\intercal}$, and get
\begin{equation*}
\begin{aligned}
   \displaystyle \llangle ({e}_U^{\prime},{e}_V^{\prime})^{\intercal},({e}_U,{e}_V)^{\intercal} \rrangle =
   \llangle (e_{V},-Ae_U)^{\intercal},(e_U,e_V)^{\intercal} \rrangle +
 \llangle -({R}_U,R_{V})^{\intercal},({e}_U,{e}_V)^{\intercal} \rrangle.
 \end{aligned}
\end{equation*}
Using the  self-adjointness of $A$, we arrive at
\begin{equation*}
\begin{aligned}
   \displaystyle \llangle ({e}_U^{\prime},{e}_V^{\prime})^{\intercal},({e}_U,{e}_V)^{\intercal} \rrangle =
   \langle Ae_{V},e_U \rangle- \langle Ae_U,e_V \rangle
 -\langle A{R}_U,e_{U} \rangle- \langle {R}_V,{e}_V\rangle,
 \end{aligned}
\end{equation*}
i.e.,
\begin{equation*}
  \displaystyle   \frac{1}{2} \frac{d}{dt} |\Vert (e_U,e_V) \Vert|^2 =  \llangle -({R}_U,R_{V})^{\intercal},({e}_U,{e}_V)^{\intercal} \rrangle.
\end{equation*}
By the Cauchy--Schwarz inequality, we obtain
\begin{equation}\label{inequality1}
 \displaystyle  \frac{1}{2} \frac{d}{dt} |\Vert (e_U,e_V) \Vert|^2\leq  |\Vert ({R}_U,R_{V}) \Vert| |\Vert ({e}_U,{e}_V) \Vert|.
\end{equation}
Denoting $|\Vert (e_U,e_V)(\tau) \Vert|=\max_{t\in [0,t^{N}]} |\Vert (e_U,e_V)(t) \Vert|$,  we integrate inequality \eqref{inequality1} from $0$ to $\tau$ and obtain
\begin{equation}\label{inequality2}
 \displaystyle \frac{1}{2} |\Vert (e_U,e_V)(\tau)\Vert|^2 \leq    \frac{1}{2}  |\Vert (e_U,e_V)(0)\Vert|^2  + |\Vert (e_U,e_V)(\tau) \Vert | \int_{0}^{\tau} |\Vert ({R}_U,R_{V}) (s)\Vert|ds.
\end{equation}
It follows from the Young inequality that
\begin{equation*}
 \displaystyle |\Vert (e_U,e_V)(\tau)\Vert|^2 \leq
\frac{1}{1-2\theta}|\Vert (e_U,e_V)(0)\Vert|^2 +
 \frac{1}{2\theta-4\theta^2} \bigg( \int_{0}^{\tau} |\Vert ({R}_U,R_{V}) (s)\Vert|ds\bigg)^2,
\end{equation*}
where the constant $0<\theta<1/2$. The proof is complete.
\end{proof}

\begin{remark}
Theorem \ref{suboptimal error bound} gives the upper  bound for the IF midpoint method for linear hyperbolic equations.
Although the error $e=z-Z$ is of second order,  residual-based a posteriori error estimate is of suboptimal (first) order.
 To derive optimal order error bounds, we introduce an appropriate reconstruction $\hat{Z}=(\hat{U},\hat{V})^{\intercal}:[0,T]\rightarrow X$
 of the approximation  $Z$ and
  analyze the optimality of the a posteriori error estimator in the following section.
\end{remark}

\section{Optimal a posteriori error estimate} In this section we will derive optimal order a posteriori error estimates for the IF midpoint method for linear hyperbolic equations.
\subsection{Reconstruction}\label{sec:reconstruction}
Let us define the linear interpolation $\psi(t):J_{n}\rightarrow X$ of  $e^{-\frac{1}{2}k_nM}\bar{f}$ at the nodes $t^{n-\frac{1}{2}}$ and $t^{n-1}$,
\begin{equation}
  \psi(t):=e^{-\frac{1}{2}k_nM}\Big(\bar{f}^{n-\frac{1}{2}}+\frac{2}{k_n}(t-t^{n-\frac{1}{2}})\big(\bar{f}^{n-\frac{1}{2}}-\bar{f}^{n-1}\big)\Big), \quad t\in J_{n},
\end{equation}
with $\bar{f}^{n-\frac{1}{2}}:=\bar{f}(t^{n-\frac{1}{2}})$, and introduce the piecewise quadratic polynomial  $\Psi(t):=\int_{t^{n-1}}^{t} \psi(s)ds$,
\begin{equation*}
 \Psi(t)=e^{-\frac{1}{2}k_nM}\Big((t-t^{n-1})\bar{f}^{n-\frac{1}{2}}+\frac{1}{k_n}(t-t^{n-1})(t-t^{n})\big(\bar{f}^{n-\frac{1}{2}}-\bar{f}^{n-1}\big)\Big).
\end{equation*}
Importantly, $\Psi$ has the property that
\begin{equation*}
\Psi(t^{n-1})=0,\quad \Psi(t^{n})=k_ne^{-\frac{1}{2}k_nM}\bar{f}^{n-\frac{1}{2}}=\int_{t^{n-1}}^{t^{n}} \psi(s)ds.
\end{equation*}
For any $t\in J_{n}$, we are  now in a position to introduce the  reconstruction $\hat{Z}$ of $Z$ by
\begin{equation}\label{reconstruction}
\begin{aligned}
\hat{Z}(t):&=Z^{n-1}-\int_{t^{n-1}}^{t}e^{-\frac{1}{2}k_nM}MZ(s)ds+\Psi(t)+\big(I-e^{-\frac{1}{2}k_nM}\big)\int_{t^{n-1}}^{t} Z^{\prime}(s)ds\cr\noalign{\vskip2truemm}
&=Z^{n-1}-\int_{t^{n-1}}^{t}e^{-\frac{1}{2}k_nM}MZ(s)ds+\Psi(t)+(t-t^{n-1})\big(I-e^{-\frac{1}{2}k_nM}\big)\bar{\partial}
Z^{n},
\end{aligned}
\end{equation}
 Evaluating   the integral of \eqref{reconstruction}  by  the following approximation
\begin{equation*}
\begin{aligned}
  \hat{Z}(t)&=Z^{n-1}-(t-t^{n-1})e^{-\frac{1}{2}k_nM}M(\tau_1Z^{n-1}+\tau_2Z^{n})\cr\noalign{\vskip3truemm}
  &\quad +\Psi(t)+(t-t^{n-1})\big(I-e^{-\frac{1}{2}k_nM}\big)\bar{\partial}Z^{n}, \quad \forall t \in J_{n},
\end{aligned}
\end{equation*}
where the constants $\tau_1=-k_n^{-1}(e^{-\frac{1}{2}k_nM}M)^{-1}\big[I-e^{-\frac{1}{2}k_nM}-k_n\varphi_1(-k_nM)M\big]$, and
$\tau_2=k_n^{-1}(e^{-\frac{1}{2}k_nM}M)^{-1}\big[I-e^{-\frac{1}{2}k_nM}\big]$, thus the reconstruction $\hat{Z}$ takes the form
{\small{
\begin{equation}\label{reconstruction2}
\begin{aligned}
  \hat{Z}(t)&=Z^{n-1}+\frac{t-t^{n-1}}{k_n}\bigg[ \Big(I-e^{-\frac{1}{2}k_nM}-k_n\varphi_1(-k_nM)M\Big)Z^{n-1}-\big(I-e^{-\frac{1}{2}k_nM}\big)Z^{n}\bigg]  \cr\noalign{\vskip2truemm}
&\quad +\Psi(t)+\frac{t-t^{n-1}}{k_n}\big(I-e^{-\frac{1}{2}k_nM}\big)(Z^{n}-Z^{n-1})  \cr\noalign{\vskip2truemm}
  &=Z^{n-1}-(t-t^{n-1})\varphi_1(-k_nM)MZ^{n-1}+ \Psi(t).
  \end{aligned}
\end{equation}}}
It is simple to verify that $\hat{Z}(t^{n-1})=Z^{n-1}$ and
\begin{equation*}
  \hat{Z}(t^{n})=Z^{n-1}-k_n\varphi_1(-k_nM)MZ^{n-1}+k_ne^{-\frac{1}{2}k_nM}\bar{f}^{n-\frac{1}{2}}=Z^{n}.
\end{equation*}
Therefore, the reconstruction $\hat{U}$ is coincided with the approximation $U$ at the nodes $\{t^{n}\}_{n=0}^{N}$; in particular,  the reconstruction $\hat{U}:[0,T]\rightarrow X$ is continuous.

 The residual $\hat{R}(t)=(\hat{R}_{\hat{U}}(t),\hat{R}_{\hat{V}}(t))^{\intercal}$ of the reconstruction $\hat{Z}$ is defined by
 \begin{equation*}
   \hat{R}(t):=\hat{Z}^{\prime}(t)+M\hat{Z}(t)-\bar{f}(t), \quad  t\in J_n.
 \end{equation*}
By \eqref{reconstruction},  the reconstruction $\hat{Z}$ satisfies that
\begin{equation*}
  \hat{Z}^{\prime}(t)=-e^{-\frac{1}{2}k_nM}MZ(t)+\psi(t)+\big(I-e^{-\frac{1}{2}k_nM}\big)Z^{\prime}(t),\quad \forall t\in J_{n},
\end{equation*}
and thus we have
\begin{equation*}
  \hat{R}(t)=-e^{-\frac{1}{2}k_nM}MZ(t)+\psi(t)+(I-e^{-\frac{1}{2}k_nM})Z^{\prime}(t)+M\hat{Z}(t)-\bar{f}(t),\quad t\in J_{n}.
\end{equation*}
The residual $\hat{R}$ can be rewritten as
\begin{equation}\label{second order residual}
\begin{aligned}
  \hat{R}(t)&=M(\hat{Z}(t)-Z(t))+\psi(t)+\big(I-e^{-\frac{1}{2}k_nM}\big)\big(Z^{\prime}(t)+MZ(t)\big)-\bar{f}(t)\cr\noalign{\vskip2truemm}
  &=M\big(\hat{Z}(t)-Z(t)\big)+ \big(I-e^{-\frac{1}{2}k_nM}\big)\big(Z^{\prime}(t)+MZ(t)-\bar{f}(t)\big)
  \cr\noalign{\vskip2truemm}
  &\quad+e^{-\frac{1}{2}k_nM}\Big(\bar{f}^{n-\frac{1}{2}}+\frac{2}{k_n}(t-t^{n-\frac{1}{2}})
  (\bar{f}^{n-\frac{1}{2}}-\bar{f}^{n-1})-\bar{f}(t)\Big), \quad t\in J_{n}.
  \end{aligned}
\end{equation}

\begin{remark}[Optimality of the residual $\hat{R}$] In light of \eqref{linearinterpolation} and \eqref{reconstruction2},   the a posteriori quantity  $\hat{Z}(t)-Z(t)$ is
\begin{equation*}
\begin{aligned}
\hat{Z}(t)-Z(t)&= Z^{n-1}-(t-t^{n-1})\varphi_1(-k_nM)MZ^{n-1}+\Psi(t)-Z(t)\cr\noalign{\vskip2truemm}
&=\frac{1}{k_n} (t-t^{n-1})(t-t^{n}) e^{-\frac{1}{2}k_nM}\big( \bar{f}^{n-\frac{1}{2}}-\bar{f}^{n-1}\big), \quad t \in J_{n},
\end{aligned}
\end{equation*}
i.e.,
\begin{equation*}
\begin{aligned}
\left(
                   \begin{array}{c}
                    \hat{U}(t)-U(t)\\
                     \hat{V}(t)-V(t)
                   \end{array}
                 \right)
= \frac{1}{k_n}(t-t^{n-1})(t-t^{n})e^{-\frac{1}{2}k_nM} \left(
    \begin{array}{c}
  0\\
 f(t^{n-\frac{1}{2}})-f(t^{n-1})
    \end{array}
  \right), \quad t\in J_{n}.
\end{aligned}\end{equation*}
For the uniformly bounded exponential operator $e^{-tM}$, the a posteriori quantities $\hat{Z}(t)-Z(t)$  and $e^{-\frac{1}{2}k_nM}\big(\bar{f}^{n-\frac{1}{2}}+\frac{2}{k_n}(t-t^{n-\frac{1}{2}})
  (\bar{f}^{n-\frac{1}{2}}-\bar{f}^{n-1})-\bar{f}(t)\big)$ are of second order once $\bar{f}\in C^{2}(0,T;X)$. Due to \eqref{linear residual} and \eqref{exponential property1}, the a posteriori quantity $(I-e^{-\frac{1}{2}k_nM})\big(Z^{\prime}(t)+MZ(t)-\bar{f}(t)\big)$ is also second order. Consequently, the residual $\hat{R}$ is of optimal (second) order.
\end{remark}

\subsection{Error analysis}\label{sec:optimal error estimates}
Setting the errors $e:=z-Z=(e_U,e_V)^{\intercal}$ and $\hat{e}:=z-\hat{Z}=(\hat{e}_{U},\hat{e}_{V})^{\intercal}$, and according to the fact that
\begin{equation*}\left\{
\begin{array}{l}
\displaystyle
z^{\prime}(t)+Mz(t)=\bar{f}(t),\cr\noalign{\vskip2truemm}
\displaystyle
\hat{Z}^{\prime}(t)+e^{-\frac{1}{2}k_nM}MZ(t)=\psi(t)+\big(I-e^{-\frac{1}{2}k_nM}\big)Z^{\prime}(t),
\end{array}
\right.
\end{equation*}
it follows that
\begin{equation}\label{errorequa}
\begin{aligned}
\hat{e}^{\prime}(t)+Me(t)&=\bar{f}(t)-\psi(t)-\big(I-e^{-\frac{1}{2}k_nM}\big)\big(Z^{\prime}(t)+MZ(t)\big)\cr\noalign{\vskip2truemm}
&=e^{-\frac{1}{2}k_nM}\bar{f}(t)-\psi(t)+\big(e^{-\frac{1}{2}k_nM}-I\big)R(t),
\end{aligned}
\end{equation}
where $R(t)$ is the residual of $Z$, and  the a posteriori quantity
$e^{-\frac{1}{2}k_nM}\bar{f}(t)-\psi(t)$ is
\begin{equation}\label{error of linear interpolation}
e^{-\frac{1}{2}k_nM}\bar{f}(t)-\psi(t)=e^{-\frac{1}{2}k_nM} \Big(\bar{f}(t)-\bar{f}^{n-\frac{1}{2}}-\frac{2}{k_n}(t-t^{n-\frac{1}{2}})(\bar{f}^{n-\frac{1}{2}}-\bar{f}^{n-1})\Big), \quad t\in J_n.
\end{equation}

For the sake of convenience, we denote  the a posteriori quantities $(R_{\bar{f}_1},R_{\bar{f}_2})^{\intercal}:=e^{-\frac{1}{2}k_nM}\bar{f}(t)-\psi(t)$ and $(\bar{R}_U,\bar{R}_V)^{\intercal}:=(e^{-\frac{1}{2}k_nM}-I)R(t)$; thus we have
\begin{equation*}
\begin{aligned}
\left(    \begin{array}{c}
                    \hat{e}_U\\
                     \hat{e}_V
                   \end{array}
                 \right)^{\prime}
                 +
     \left(    \begin{array}{c}
                   - {e}_V\\
                     Ae_U
                   \end{array}
                 \right)
=
\left(    \begin{array}{c}
                    R_{\bar{f}_1}\\
                    R_{\bar{f}_2}
                   \end{array}
                 \right)
                 +
                \left(
    \begin{array}{c}
  \bar{R}_{U}\\
  \bar{R}_{V}
    \end{array}
  \right),
\end{aligned}\end{equation*}
and the error equation can be rewritten as
 \begin{equation}\label{use4}
\begin{aligned}
\left(    \begin{array}{c}
                    \hat{e}_U\\
                     \hat{e}_V
                   \end{array}
                 \right)^{\prime}
                =
     \left(    \begin{array}{c}
                   \hat{e}_V\\
                    -A\hat{e}_U
                   \end{array}
                 \right)
+   \left(    \begin{array}{c}
                   \hat{V}-V\\
                    A(U-\hat{U})
                   \end{array}
                 \right) +
\left(    \begin{array}{c}
                     R_{\bar{f}_1}\\
                    R_{\bar{f}_2}
                   \end{array}
                 \right)
              +
                \left(
    \begin{array}{c}
  \bar{R}_{U}\\
  \bar{R}_{V}
    \end{array}
  \right).
\end{aligned}\end{equation}
A further regularity assumption is imposed on $\hat{Z}$ defined in \eqref{reconstruction}:
\begin{equation*}
  \hat{Z}(t)\in D(M), \quad \forall t \in [0,T].
\end{equation*}

\begin{theorem}\label{optimal error bound}
Suppose that  $u$ is the exact solution of  \eqref{linearhyperbolicequation},  $z=(u,v)^{\intercal}$ satisfies
 \eqref{linearproblem}, and the IF midpoint approximation $Z=(U,V)^{\intercal}$
and its piecewise quadratic time reconstruction  $\hat{Z}=(\hat{U},\hat{V})^{\intercal}$ are respectively defined by \eqref{linearinterpolation}
and  \eqref{reconstruction}.  Denoting the error by $\hat{e}:=z-\hat{Z}=(\hat{e}_{U},\hat{e}_{V})^{\intercal}$,
the following a posteriori error estimate holds for any $0<\theta<1/2$:
\begin{equation*}
\begin{aligned}
 \displaystyle \max_{t\in[0,t^{N}]}|\Vert& (\hat{e}_U, \hat{e}_V)(t)\Vert|^2\leq \frac{1}{1-2\theta}  |\Vert (\hat{e}_U,\hat{e}_V)(0)\Vert|^2 \cr\noalign{\vskip0.2truemm}
 \displaystyle   + &\  \frac{1}{2\theta-4\theta^2} \bigg(\int_{0}^{t^{N}} |\Vert (\hat{V}-V+R_{\bar{f}_1}+\bar{R}_{U}, A(U-\hat{U})+R_{\bar{f}_2}+\bar{R}_{V}) (s)\Vert|ds\bigg)^2,
   \end{aligned}
\end{equation*}
where  $(\bar{R}_U,\bar{R}_V)^{\intercal}=(e^{-\frac{1}{2}k_nM}-I)R$, and
$(R_{\bar{f}_1},R_{\bar{f}_2})^{\intercal}$ is defined by \eqref{error of linear interpolation}.
\end{theorem}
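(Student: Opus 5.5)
The argument will follow the energy technique of Theorem \ref{suboptimal error bound}, now applied to the reconstruction error $\hat{e}$ through the rewritten error equation \eqref{use4}. The key observation is that \eqref{use4} has exactly the skew structure of the exact wave operator acting on $\hat{e}$. The first term on the right is $(\hat{e}_V,-A\hat{e}_U)^{\intercal}$. All remaining terms are a posteriori quantities, which I collect into a single forcing
\begin{equation*}
\mathcal{F}:=\big(\hat{V}-V+R_{\bar{f}_1}+\bar{R}_U,\ A(U-\hat{U})+R_{\bar{f}_2}+\bar{R}_V\big)^{\intercal}.
\end{equation*}
To get there, I will check that \eqref{use4} follows from \eqref{errorequa}. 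Write $e=\hat{e}+(\hat{Z}-Z)$, so that $Me=M\hat{e}+M(\hat{Z}-Z)$. The second component of $M(\hat{Z}-Z)$ is $A(\hat{U}-U)$, and the first is $-(\hat{V}-V)$. Moving these to the right-hand side gives precisely the terms $\hat{V}-V$ and $A(U-\hat{U})$ in \eqref{use4}. The regularity assumption $\hat{Z}(t)\in D(M)$ guarantees that everything is well defined.

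Next I take the energy inner product $\llangle\cdot,\cdot\rrangle$ of \eqref{use4} with $(\hat{e}_U,\hat{e}_V)^{\intercal}$. The left side is $\frac12\frac{d}{dt}|\Vert(\hat{e}_U,\hat{e}_V)\Vert|^2$. The skew term contributes
\begin{equation*}
\langle A^{1/2}\hat{e}_V,A^{1/2}\hat{e}_U\rangle-\langle A\hat{e}_U,\hat{e}_V\rangle=0,
\end{equation*}
by self-adjointness of $A$, exactly as in the proof of Theorem \ref{suboptimal error bound}. What remains is $\llangle\mathcal{F},(\hat{e}_U,\hat{e}_V)^{\intercal}\rrangle$, and Cauchy--Schwarz bounds it by $|\Vert\mathcal{F}\Vert|\,|\Vert(\hat{e}_U,\hat{e}_V)\Vert|$.

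Let $\tau$ be a point where $|\Vert(\hat{e}_U,\hat{e}_V)(t)\Vert|$ attains its maximum over $[0,t^N]$. Integrating from $0$ to $\tau$ gives
\begin{equation*}
\tfrac12 E(\tau)^2\le\tfrac12E(0)^2+E(\tau)\int_0^{t^N}|\Vert\mathcal{F}(s)\Vert|\,ds,
\end{equation*}
where $E(t):=|\Vert(\hat{e}_U,\hat{e}_V)(t)\Vert|$. Now apply Young's inequality in the form
\begin{equation*}
E(\tau)\,\mathcal{I}\le\theta E(\tau)^2+\tfrac{1}{4\theta}\mathcal{I}^2,
\end{equation*}
with $\mathcal{I}:=\int_0^{t^N}|\Vert\mathcal{F}(s)\Vert|\,ds$. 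This yields $(1-2\theta)E(\tau)^2\le E(0)^2+\frac{1}{2\theta}\mathcal{I}^2$. Dividing by $1-2\theta$ gives exactly the coefficients $\frac{1}{1-2\theta}$ and $\frac{1}{2\theta(1-2\theta)}=\frac{1}{2\theta-4\theta^2}$ in the statement.

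I expect the main obstacle to be justifying the energy identity rigorously, not the algebra. Three points need care. First, $\hat{e}$ is only piecewise smooth in time: $\hat{Z}$ is continuous across the nodes but its derivative jumps. Second, $A\hat{e}_U$ must make sense in $H$ when paired with $\hat{e}_V$. Third, the forcing component $A(U-\hat{U})$ must lie in $D(A^{1/2})$. I would handle the first point by carrying out the differentiation on each $J_n$ and summing, using the continuity of $\hat{e}$ at the nodes. For the second and third, I would invoke the regularity assumption $\hat{Z}\in D(M)$ and the fact that $U-\hat{U}=0$ by the formula in the optimality remark. This last fact makes the first entry of the second component of $\mathcal{F}$ harmless. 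If the exact solution lacks the needed smoothness, I would fall back on a standard density argument.
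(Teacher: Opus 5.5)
Your proposal is correct and follows the paper's proof: compact error equation \eqref{use5}, energy inner product with $(\hat e_U,\hat e_V)^{\intercal}$, cancellation of the skew term by self-adjointness of $A$, Cauchy--Schwarz, integration up to the maximizer $\tau$, and Young's inequality giving the constants $\frac{1}{1-2\theta}$ and $\frac{1}{2\theta-4\theta^2}$. One correction to your regularity remarks: $U-\hat U$ does not vanish (the first component of $e^{-\frac12 k_nM}(0,g)^{\intercal}$ is $A^{-1/2}\sin(\tfrac12 k_nA^{1/2})g\neq 0$ in general, which is why $A(U-\hat U)$ stays in the estimator), but you do not need it to: this term sits in the $H$-slot of $\llangle\cdot,\cdot\rrangle$, so it only has to lie in $H$, and that follows from $U(t)\in D(A)$ (since $Z(t)\in D(M)$) together with $\hat U(t)\in D(A)$ (from the assumption $\hat Z(t)\in D(M)$).
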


\begin{proof}
We simplify the error equation \eqref{use4} into a compact form
 \begin{equation}\label{use5}
\begin{aligned}
\left(    \begin{array}{c}
                    \hat{e}_U\\
                     \hat{e}_V
                   \end{array}
                 \right)^{\prime}
                =
     \left(    \begin{array}{c}
                   \hat{e}_V\\
                    -A\hat{e}_U
                   \end{array}
                 \right)
+   \left(    \begin{array}{c}
                   \hat{V}-V+R_{\bar{f}_1}+\bar{R}_{U}\\
                    A(U-\hat{U})+R_{\bar{f}_2}+\bar{R}_{V}
                   \end{array}
                 \right).
\end{aligned}\end{equation}
Taking the inner product of equation \eqref{use5} with  $(\hat{e}_U,\hat{e}_V)^{\intercal}$ leads to
\begin{equation*}
\begin{aligned}
   \displaystyle \llangle (\hat{e}_U^{\prime},\hat{e}_V^{\prime})^{\intercal},(\hat{e}_U,\hat{e}_V&)^{\intercal} \rrangle =
 \llangle (\hat{e}_V,-A\hat{e}_U)^{\intercal},(\hat{e}_U,\hat{e}_V)^{\intercal} \rrangle \cr\noalign{\vskip3truemm}
 \displaystyle &\ +\llangle (\hat{V}-V+R_{\bar{f}_1}+\bar{R}_{U}, A(U-\hat{U})+R_{\bar{f}_2}+\bar{R}_{V})^{\intercal},(\hat{e}_U,\hat{e}_V)^{\intercal}\rrangle.
 \end{aligned}
\end{equation*}
By using the self-adjointness of $A$, we get
\begin{equation*}
\begin{aligned}
 \displaystyle\llangle (\hat{e}_U^{\prime},\hat{e}_V^{\prime})^{\intercal},(\hat{e}_U,\hat{e}_V & )^{\intercal}\rrangle
 =\langle A\hat{e}^{\prime}_U,\hat{e}_U  \rangle +\langle  \hat{e}^{\prime}_V,\hat{e}_V \rangle\cr\noalign{\vskip1truemm}
  \displaystyle&= \langle A\hat{e}_V, \hat{e}_U \rangle - \langle A\hat{e}_U,\hat{e}_V \rangle+\langle  A(\hat{V}-V+R_{\bar{f}_1}+\bar{R}_{U}), \hat{e}_U \rangle \cr\noalign{\vskip1truemm}
 \displaystyle &\quad +\langle A(U-\hat{U})+R_{\bar{f}_2}+\bar{R}_{V}, \hat{e}_V \rangle \cr\noalign{\vskip1truemm}
 \displaystyle &= \langle  A(\hat{V}-V+R_{\bar{f}_1}+\bar{R}_{U}), \hat{e}_U \rangle  +\langle A(U-\hat{U})+R_{\bar{f}_2}+\bar{R}_{V}, \hat{e}_V \rangle,
 \end{aligned}
\end{equation*}
i.e.,
\begin{equation}\label{use6}
\frac{1}{2} \frac{d}{dt} |\Vert (\hat{e}_U,\hat{e}_V)\Vert|^2=\llangle (\hat{V}-V+R_{\bar{f}_1}+\bar{R}_{U}, A(U-\hat{U})+R_{\bar{f}_2}+\bar{R}_{V})^{\intercal},(\hat{e}_U,\hat{e}_V)^{\intercal}\rrangle.
\end{equation}
Applying the  Cauchy--Schwarz inequality to \eqref{use6} yields
\begin{equation*}
\frac{1}{2} \frac{d}{dt} |\Vert (\hat{e}_U,\hat{e}_V)\Vert|^2 \leq  |\Vert (\hat{V}-V+R_{\bar{f}_1}+\bar{R}_{U}, A(U-\hat{U})+R_{\bar{f}_2}+\bar{R}_{V}) \Vert|
|\Vert (\hat{e}_U,\hat{e}_V) \Vert|.
\end{equation*}

Following a similar argument to that for deriving \eqref{inequality2}, we deduce that
\begin{equation*}
\begin{aligned}
 \displaystyle\frac{1}{2}  |\Vert (\hat{e}_U,\hat{e}_V)(\tau)\Vert|^2  &\leq  \frac{1}{2}  |\Vert (\hat{e}_U,\hat{e}_V)(0)\Vert|^2  + |\Vert (\hat{e}_U,\hat{e}_V)(\tau) \Vert | \cr\noalign{\vskip0.2truemm}
 \displaystyle&\quad  \times \int_{0}^{t^{N}} |\Vert (\hat{V}-V+R_{\bar{f}_1}+\bar{R}_{U}, A(U-\hat{U})+R_{\bar{f}_2}+\bar{R}_{V}) (s)\Vert|ds.
\end{aligned}
\end{equation*}
By the Young inequality, we have
\begin{equation*}
\begin{aligned}
  |\Vert (\hat{e}_U, & \hat{e}_V)(\tau)\Vert|^2 \leq   \frac{1}{1-2\theta}  |\Vert (\hat{e}_U,\hat{e}_V)(0)\Vert|^2 \cr\noalign{\vskip0.2truemm}
 &+ \frac{1}{2\theta-4\theta^2}\bigg(\int_{0}^{t^{N}} |\Vert (\hat{V}-V+R_{\bar{f}_1}+\bar{R}_{U}, A(U-\hat{U})+R_{\bar{f}_2}+\bar{R}_{V}) (s)\Vert|ds\bigg)^2,
\end{aligned}
\end{equation*}
with the constant $0<\theta<1/2$. This completes the proof.
\end{proof}

  \begin{remark}Theorem  \ref{optimal error bound} presents optimal order a posteriori error estimate 
  for the IF midpoint method for \eqref{linearhyperbolicequation} in the $L^\infty$-in-time and energy-in-space norms.  The upper error bound depends only on the discretization parameters and the data of the problems, and is independent of
the unknown exact solutions; therefore, it is computable.
\end{remark}

\begin{remark}
Error bound established in this paper, involves  exponential functions; however, the practical computation of the products of matrix exponentials and vectors with larger time stepsizes  may slightly affect the convergence rate of the a posteriori quantity. In addition, the $\varphi$-functions are employed solely to the analyze the residual $R(t)$ and introduce the reconstruction $\hat{Z}$, entailing no computation in numerical experiments.
\end{remark}

\begin{remark}
From \eqref{reconstruction2}, the reconstruction $\hat{Z}=(\hat{U},\hat{V})^{\intercal}$ and the IF midpoint approximation $Z=(U,V)^{\intercal}$ are continuous at the nodes $t^{0},\ldots,t^{N}$,  then for  $0<\theta<1/2$,  the error  $e=(e_U,e_V)^{\intercal}$ satisfies
\begin{equation*}
\begin{aligned}
 \displaystyle \max_{ 0\leq n\leq N }&|\Vert ({e}_U,{e}_V)(t^{n})\Vert|^2 \leq   \frac{1}{1-2\theta}|\Vert (e_U,e_V)(0)\Vert|^2\cr\noalign{\vskip0.2truemm}
 \displaystyle  &\  +\frac{1}{2\theta-4\theta^2} \bigg(\int_{0}^{t^{N}} |\Vert (\hat{V}-V+R_{\bar{f}_1}+\bar{R}_{U}, A(U-\hat{U})+R_{\bar{f}_2}+\bar{R}_{V}) (s)\Vert|ds\bigg)^2.
   \end{aligned}
\end{equation*}
\end{remark}

\begin{remark} It is of some interest to note that the error $|\Vert (\hat{e}_U, \hat{e}_V)(t)\Vert|$  is bounded by the initial error $|\Vert (\hat{e}_U,\hat{e}_V)(0)\Vert|$ and the residual $\bigg(\int_{0}^{t^{N}} |\Vert (\hat{V}-V+R_{\bar{f}_1}+\bar{R}_{U}, A(U-\hat{U})+R_{\bar{f}_2}+\bar{R}_{V}) (s)\Vert|ds\bigg)$. When the initial approximation $Z^{0}=(U^{0},V^{0})^{\intercal}$ is taken as the exact initial data $(u^{0},v^{0})^{\intercal}$, the error will be completely determined by the residual. 
\end{remark}

\section{An adaptive algorithm}\label{sec:an adaptive algorithm}
We briefly describe the adaptive IF midpoint method for linear hyperbolic equations on the interval $J_{n}$. 
Given the tolerance error $Tol$, the initial time step-size $k_0$, 
and the parameters $\delta_{1}$, $\delta_2$, $\delta_3\in(0,1)$. 
Let $k_{\max}$ denote the maximum time step-size and $Count$ denote the iteration counter. The a posteriori error estimator is required to satisfy
 \begin{equation}\label{tolerance}
 \begin{aligned}
Tol&\geq \frac{1}{1-2\theta}|\Vert (e_U,e_V)(0)\Vert|^2\cr\noalign{\vskip0.2truemm}
&+\frac{1}{2\theta-4\theta^2} \Big(\sum_{n=1}^{N}\int_{t^{n-1}}^{t^{n}} |\Vert (\hat{V}-V+R_{\bar{f}_1}+\bar{R}_{U}, A(U-\hat{U})+R_{\bar{f}_2}+\bar{R}_{V}) (s)\Vert|ds\Big)^2.
 \end{aligned}
 \end{equation}
Let $\mathcal{E}_{\theta}^{n}$,
\begin{equation*}
\begin{aligned}
\mathcal{E}_{\theta}^{n}:&=\frac{1}{T\sqrt{1-2\theta}}|\Vert (e_U,e_V)(0)\Vert|\cr\noalign{\vskip0.2truemm}
&+\frac{1}{k_n\sqrt{(2\theta-4\theta^2)}}\int_{t^{n-1}}^{t^{n}} |\Vert (\hat{V}-V+R_{\bar{f}_1}+\bar{R}_{U}, A(U-\hat{U})+R_{\bar{f}_2}+\bar{R}_{V}) (s)\Vert|ds,
\end{aligned}
\end{equation*} be the local error estimator.  If $\mathcal{E}_{\theta}^{n}\leq \eta$, where $\eta:=\frac{1}{T}\sqrt{Tol}$ and $\theta \in (0,\frac{1}{2})$, then inequality \eqref{tolerance} holds.
Our time-stepping strategy is based on the local error estimator $\mathcal{E}_{\theta}^{n}$,   we compute $\mathcal{E}_{\theta}^{n}$ and check $\mathcal{E}_{\theta}^{n}\leq \eta$ for each time step (cf., e.g., \cite{Chen2004,Huang2013}). Note that the initial error $|\Vert (e_U,e_V)(0)\Vert|$  only needs to be calculated once during the entire adaptive process for computing $\mathcal{E}_{\theta}^{n}$. When $\mathcal{E}_{\theta}^{n}\leq \delta_1 \eta$, the current step-size $k_n$ is highly suitable, and we try to increase the time step-size $k_{n+1}=\min(k_{\max},k_n/{\delta_2})$ for the subsequent step; when $\delta_1 \eta <\mathcal{E}_{\theta}^{n}\leq \eta$, the current step-size $k_n$ is acceptable, and we retain $k_{n+1}=k_{n}$; otherwise, the current step-size $k_n$ is too large  and we reduce it to $k_n=\delta_3k_n$, and the discrete solution $Z^{n}$ along with the error estimator $\mathcal{E}_{\theta}^{n}$ are recomputed. Setting $\delta_1={1}/{4}$,  $\delta_2={2}/{3}$, and $\delta_3={1}/{2}$,  the pseudocode of the adaptive IF midpoint method is as follows.
\begin{algorithm}
\caption{Time step-size control}
\label{alg:time step-size}
\begin{algorithmic}
\STATE{ Choose parameters: $Tol$,  $\theta$, $k_0$, $k_{\max}$}
\STATE{Initialization: $Z^{0}:=(U^0,V^{0})^{\intercal}$,  $Count=0$, $t^{0}=0$}
\STATE{Set $k_n:=k_{n-1}$}
\STATE{Compute $Z^{n}$ and  $\mathcal{E}_{\theta}^{n}$}
\IF{$\mathcal{E}_{\theta}^{n}\leq \delta_1\eta$}
\STATE{$t^{n}:=t^{n-1}+k_n$}
\STATE{$Count=Count+1$}
\STATE{$k_{n+1}=min(k_{\max},k_n/{\delta_2}$})
\ELSIF{$\delta_1 \eta <\mathcal{E}_{\theta}^{n}\leq \eta$}
\STATE{$t^{n}:=t^{n-1}+k_{n}$}
\STATE{$Count=Count+1$}
\STATE{$k_{n+1}=k_{n}$}
\ELSE
\STATE{$k_n=\delta_3 k_{n}$}
\ENDIF
\end{algorithmic}
\end{algorithm}

\begin{remark}  From Algorithm \ref{alg:time step-size}, we present an adaptive step-size IF midpoint method \eqref{IFmethod} for solving second order evolution equations, using the parameters  $Tol$, $\theta$,  $k_0$, and $k_{\max}$. Here the tolerance $Tol$ regulates the local error estimator $\mathcal{E}_{\theta}^{n}$, thereby determining the accuracy of  numerical solutions $Z^{n}$;
the parameter $k_{\max}$ constrains the maximum time step-size to prevent numerical instability induced by excessive single-step growth.
\end{remark}

\section{Numerical experiments}\label{sec:numerical experiments}
In this section, we implement several numerical examples to confirm the correct convergence rate of the a posteriori error estimator and the effectiveness of the adaptive algorithm. The matrix exponential $e^{-t\mathbf{M}}$ is computed by the \emph{expm} function in MATLAB R2024a.

\begin{example}\label{example1} We first consider the following hyperbolic equation
    \begin{equation}\label{problem1}
\left\{
\begin{aligned}
  \displaystyle   \frac{\partial^2u}{\partial t^2}&=\frac{\partial^2u}{\partial x^2}+f(x,t), & x \in [0,1],  \quad t\in[0,1], \cr\noalign{\vskip2truemm}
  \displaystyle   u(0,t)&=u(1,t)=0, & t\in[0,1],  \cr\noalign{\vskip2truemm}
  \displaystyle   u(x,0)&=u^{0}(x), \quad \frac{\partial u}{\partial t}(x,0)=v^{0}(x), &x\in [0,1],
\end{aligned}
   \right.
    \end{equation}
where the functions $f$, $u^{0}(x)$ and $v^{0}(x)$ are chosen such that the exact solution of  \eqref{problem1} is $u(x,t)=\sin(\pi x)e^{t}$.
\end{example}

The spatial derivative  is discretized  by  the second order finite difference, and we obtain the following first order system
\[\frac{d}{dt}\left(\begin{array}[c]{c}
\bar{U}\\
\bar{V}\\
\end{array}\right)+\left(\begin{array}[c]{cc}
{\bf 0}_{m\times m}&-\mathbf{I}_{m\times m}\\
\mathbf{A}_{m\times m}&{\bf 0}_{m\times m}\\
\end{array}\right)\left(\begin{array}[c]{c}
\bar{U}\\
\bar{V}\\
\end{array}\right)=\left(\begin{array}[c]{c}
0^{\intercal}\\
F\\
\end{array}\right),
\]
where  $\mathbf{I}_{m\times m}$ is the $m\times m$ identity matrix,
\begin{equation*}
\bar{U}(t)=(u_1(t),\cdots,u_m(t))^{\intercal}, \  \bar{V}(t)=(v_1(t),\cdots,v_m(t))^{\intercal}, \   F(t)=(f_1(t),\cdots,f_m(t))^{\intercal},
\end{equation*}
with  $u_i(t) \approx u(x_i,t)$, $v_i(t)\approx u^{\prime}(x_i,t)$, $i=1,\ldots,m$,  $x_i=i\Delta x$, $\Delta x=\frac{1}{m+1}$,  and
\[\mathbf{A}_{m\times m}=\dfrac{1}{\Delta x^2}
\left(
\begin{array}
[c]{ccccc}
2 & -1 &  &  & \\
-1 & 2&  -1 &  &  \\
& \ddots & \ddots & \ddots &   \\
  &  & -1 & 2 & -1\\
  &  &   &  -1&2  \\
\end{array}
\right)_{m \times m}.
\]
Denoting $Err_{\infty}$ by the error in  the discrete maximum norm,
\begin{equation*}
Err_{\infty}:= \max_{ 1\leq n\leq N }|\Vert ({e}_U,{e}_V)(t^{n})\Vert|= \max_{ 1\leq n\leq N }\Big(\langle A{e}_U(t^{n}), {e}_U (t^{n})\rangle+ \langle {e}_V(t^{n}),{e}_V(t^{n})\rangle \Big)^{\frac{1}{2}}.
\end{equation*}
The a posteriori quantity  $\mathcal{E}$  is  defined by
\begin{equation*}
\mathcal{E}:= \sum_{n=1}^{N} \int_{t^{n-1}}^{t^{n}} |\Vert (\hat{V}-V+R_{\bar{f}_1}+\bar{R}_{U}, A(U-\hat{U})+R_{\bar{f}_2}+\bar{R}_{V}) (s)\Vert|ds.
\end{equation*}
For simplicity, we take the Gauss–Legendre quadrature formula with three nodes to approximate the integral between $t^{n-1}$ and  $t^n$.

For the temporal accuracy test, we take  the spatial mesh size   $\Delta x=1/2500$ and a uniform temporal step-size $k=1/N$, $N=16,32,\ldots,512$. According to the a posteriori error analysis established in  Theorem \ref{optimal error bound}, the upper error bound of the method \eqref{IFmethod} is $5\mathcal{E}$ when  $\theta=\frac{5-\sqrt{21}}{20}\in (0,\frac{1}{2})$, and we introduce the effectivity index of the upper estimator $ei_U:=\frac{5\mathcal{E}}{Err_{\infty}}$. Table \ref{table1-1} presents  second order convergence rates of the error  $Err_{\infty}$  and  the a posteriori quantity $\mathcal{E}$. From Table \ref{table1-1}, we observe that
the effectivity index $ei_U$ seems to be asymptotically constant (around $1.0$).

\begin{table}
\centering
\caption{Order of the exact error $Err_{\infty}$ and  the a posteriori quantity $\mathcal{E}$,   and  the effectivity index for the method \eqref{IFmethod} for Example \ref{example1}}\label{table1-1}
\renewcommand\arraystretch{1.3}
  \begin{tabular}{ccccccc}
 \hline
    N& $Err_{\infty}$&  order  & $\mathcal{E}$ & order & $5\mathcal{E}$ & $ei_U$ \\
   \hline
16&1.5352e-02  &       &4.5184e-03 &          &2.2592e-02 & 1.4716\\
32&3.8357e-03  &2.0009	&9.4953e-04 &2.2505   &4.7476e-03 & 1.2378\\
64&9.5937e-04  &1.9993   &2.1692e-04 &2.1300  &1.0846e-03 & 1.1305\\
128& 2.4047e-04&1.9963  &5.1730e-05 &2.0681   &2.5865e-04 &  1.0756\\
256&6.0752e-05&1.9848   &1.2648e-05 &2.0321   &6.3240e-05 &1.0410 \\
512&1.5832e-05&1.9401   &3.1711e-06 & 1.9959  &1.5856e-05 & 1.0015 \\
 \hline
       \end{tabular}
\end{table}

\begin{example}\label{example2}
We then  consider  the another linear  hyperbolic equation \eqref{problem1} with $u(x,t)$$=\cos(\frac{\pi}{2}x)e^{-10t}$ in the interval  $[-1,1]\times [0,1]$.
\end{example}

Similar to Example \ref{example1}, we employ the center difference scheme with spatial mesh size  $\Delta x=1/800$ to discretize  the spatial derivative.
For a uniform temporal step-size $k=1/N$,  Table \ref{table2-1} presents second-order convergence rates of the error $Err_{\infty}$ and the a posteriori quantity $\mathcal{E}$.  We set the parameter $\theta={1}/{4}$ and the effectivity index of the upper estimator is around $2.7$.  Numerical results are coincided with the theoretical analysis established in Theorem \ref{optimal error bound}.

\begin{table}
\centering
\caption{Order of the exact error $Err_{\infty}$ and  the a posteriori quantity $\mathcal{E}$,   and  the effectivity index for the method \eqref{IFmethod} for Example \ref{example2}}\label{table2-1}
\renewcommand\arraystretch{1.3}
  \begin{tabular}{ccccccc}
 \hline
    N& $Err_{\infty}$&  order  & $\mathcal{E}$ & order  & $2\mathcal{E}$ & $ei_U$ \\
 \hline
16&1.6696e-01 &       &2.3318e-01 &         & 4.6636e-01 &2.7932\\
32&4.2088e-02 &1.9880	&5.7632e-02 &2.0165  &  1.1526e-01 & 2.7386\\
64&1.0544e-02 &1.9970   &1.4302e-02 & 2.0107  &   2.8604e-02 & 2.7128\\
128&2.6373e-03&1.9993  &3.5607e-03 & 2.0060  &  7.1214e-03 &   2.7003\\
256&6.5936e-04&1.9999   &8.8825e-04 &2.0031   &  1.7765e-03 &  2.6943\\
512&1.6479e-04&2.0004   &2.2181e-04 & 2.0016  &   4.4362e-04  &    2.6920  \\
 \hline
       \end{tabular}
\end{table}

\begin{example}\label{example3} In this example, we consider the linear hyperbolic equation \cite{Chen2004,Huang2013}
    \begin{equation}\label{problem3}
\left\{
\begin{aligned}
  \displaystyle  \frac{\partial^2u}{\partial t^2}&=2\frac{\partial^2u}{\partial x^2}+f(x,t),\quad & x \in [0,1],  \quad t\in[0,1], \cr\noalign{\vskip1truemm}
  \displaystyle  u(0,t)&=u(1,t)=0,    &t\in[0,1],  \cr\noalign{\vskip1truemm}
  \displaystyle u(x,0)&=u^{0}(x),  \quad \frac{\partial u}{\partial t}(x,0)=v^{0}(x), & x\in [0,1],
   \end{aligned}
   \right.
    \end{equation}
    where the functions $f$, $u^{0}$, and $v^{0}$  satisfy  that
the exact solution is
\begin{equation}\label{exsolution}
 u(x,t)=0.1\times \sin(\pi x)\times(1-e^{-10000(t-\frac{1}{2})^2}) .
\end{equation}
\end{example}

The time derivative of the solution   \eqref{exsolution} exhibits steep gradients around
the point $t=1/2$, leading to rapid changes in the solution over a
 short time interval. Consequently, a relatively small time step-size
 is required for numerical computations. We use the second order finite
 difference with the spatial mesh size $\Delta x=1/500$ to discrete the
  spatial derivative.  Convergence rates of the error $Err_{\infty}$ and
   the a posteriori quantity $\mathcal{E}$ with a uniform step-size $k=1/N$
   are reported in Table \ref{table3-1}, and  we observe that
   the accuracy and convergence rate  of  the error $Err_{\infty}$
    are sightly affected by the larger time step-size, such as $k=1/128$.
    Setting $\theta={1}/{4}$, the  effectivity index $ei_U$ is around $13.2$.

Then, we apply the adaptive  IF midpoint method with $k_0=1/60$ to Example \ref{example3},
 and present initial  parameters and numerical results in Table  \ref{table3-2},
  which validates the efficiency of the adaptive Algorithm \ref{alg:time step-size}.
  Compared to the adaptive finite element method \cite{Huang2013},
  which computes the error $\|(u-U)^{\prime}\|_{L^{\infty}(0,T)}$, our adaptive algorithm
   generally achieves higher accuracy in the energy norm for numerical solutions.
  For $Tol=0.9$, we plot the time step-size  trajectory and the error at each time step
  in Figure \ref{fig3.1}.  It can be observed that the time step-size drops sharply around
  $t=1/2$, accompanied by a peak of the error within the same region. Once $t>1/2$,
  the error approaches a constant, likely due to  the symmetry of the method \eqref{IFmethod}.
   Moreover, all numerical results satisfy $2\mathcal{E}\leq \sqrt{Tol}$,
   which corresponds to \eqref{tolerance}.
 \begin{table}
\centering
\caption{Order of the exact error $Err_{\infty}$ and  the a posteriori quantity $\mathcal{E}$,   and  the effectivity index for the method \eqref{IFmethod} for Example \ref{example3}}\label{table3-1}
\renewcommand\arraystretch{1.2}
  \begin{tabular}{ccccccc}
    \hline
    N& $Err_{\infty}$&  order & $\mathcal{E}$ & order  & $2\mathcal{E}$ & $ei_U$ \\
    \hline
128& 5.8912e-01 &        &4.1820e+00        &  &8.3641e+00 & 14.1976\\
256& 1.6928e-01 &1.7991  &1.1241e+00& 1.8955  & 2.2481e+00 &   13.2803\\
512&4.3575e-02 &1.9579   &2.8648e-01  &1.9722   & 5.7296e-01 &  13.1488\\
1024&1.0932e-02& 1.9950  &7.2176e-02 & 1.9888  &1.4435e-01 & 13.2046\\
2048&2.7416e-03&1.9954    &1.8079e-02& 1.9972&    3.6158e-02 &13.1884\\
  \hline
       \end{tabular}
\end{table}

 \begin{table}
 \small
\centering
\caption{Initial  parameters and numerical results  for the adaptive step-size IF midpoint method  for Example \ref{example3}}\label{table3-2}
\renewcommand\arraystretch{1.7}
  \begin{tabular}{ccccccccccc}
    \hline
    Tol& $\eta$ & $k_{max}$  & $Err_{\infty}$ &  $2\mathcal{E}$   & $ei_U$& $Count$ \\
    \hline
    9e-1&2.3717e-01&1e-01& 9.4614e-03& 3.0843e-02&3.2598& 159\\
   1e-1&7.9057e-02&8e-02&3.9940e-03&  1.3366e-02&3.3465&241\\
  1e-2& 2.5000e-02&4e-02& 1.0149e-03& 3.9116e-03& 3.8541& 402\\
   1e-3& 7.9057e-03&2e-02&3.2924e-04 &1.2762e-03 & 3.8763  &710\\
    1e-4&2.5000e-03&1e-02&9.2291e-05 & 3.9967e-04&4.3305& 1348\\
    \hline
       \end{tabular}
\end{table}

\begin{figure}[ht]
  \centering
  \begin{subfigure}[c]{0.8\textwidth}
  \includegraphics[width=\textwidth,height=5.4cm]{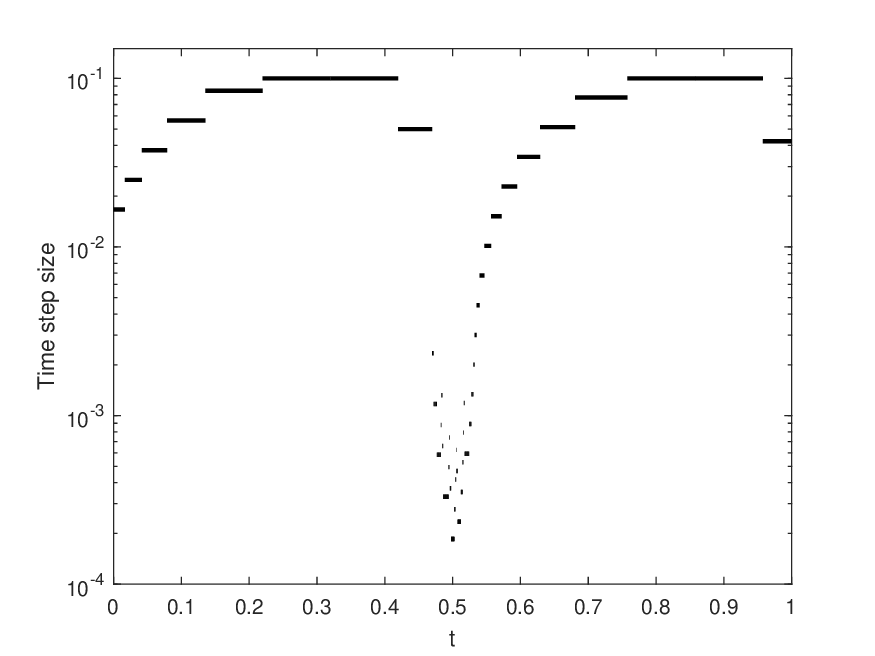}
  \end{subfigure}
      \vspace{0.5cm}
  \begin{subfigure}[c]{0.8\textwidth}
    \includegraphics[width=\textwidth,height=5.4cm]{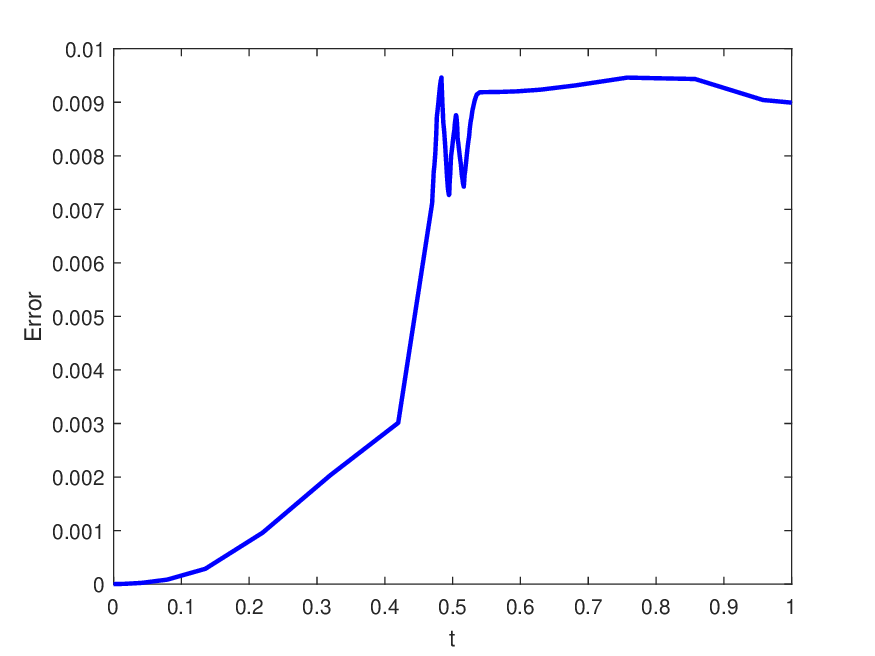}
  \end{subfigure}
    \caption{Example \ref{example3}: Time step-size trajectory (top row) and the error (bottom row) with $Tol=0.9$, $k_0=1/60$, and $k_{\max}=0.1$}\label{fig3.1}
\end{figure}

\begin{example}\label{example4}
In the final experiment, we consider equation \eqref{problem3} with  $u(x,t)=\sin(\pi x)e^{-800(\sin(\pi t /2)-1)^2}\sin(4\pi t)$ defined on the interval  $[0,1]\times [0,10]$.
\end{example}

Following Example \ref{example3}, we discretize the spatial derivative via a central difference scheme with $\Delta x=1/500$ and set the parameter $\theta=1/4$.  The error $Err_{\infty}$ and  the a posteriori quantity $\mathcal{E}$  with a uniform temporal step-size $k=10/N$ are reported in Table \ref{table4-1}. We apply the adaptive Algorithm \ref{alg:time step-size} with $k_0=0.1$ and show the numerical results in Table \ref{table4-2}. The time step-size trajectory and the error at each time step are shown in Figure \ref{fig4.1} when $Tol=0.1$. We observe that the time step-size drops sharply around $t = 1,5,9$, and is accompanied by error peaks within same time intervals; the condition $2\mathcal{E}\leq \sqrt{Tol}$ holds universally in computational results, consistent with \eqref{tolerance}.

All numerical results demonstrate that Algorithm \ref{alg:time step-size}  effectively
detects the singularity and performs local mesh refinement and coarsening.

\begin{table}
\centering
\caption{Order of the exact error $Err_{\infty}$ and  the a posteriori quantity $\mathcal{E}$,   and  the effectivity index for the method \eqref{IFmethod} for Example \ref{example4}}\label{table4-1}
\renewcommand\arraystretch{1.2}
  \begin{tabular}{ccccccc}
    \hline
    N& $Err_{\infty}$&  order & $\mathcal{E}$ & order  & $2\mathcal{E}$ & $ei_U$ \\
    \hline
320& 2.4325e-01 & &7.6526e+00&   &1.5305e+01& 62.9207\\
640&5.8559e-02&2.0545&1.8768e+00 &2.0277  &3.7536e+00 &64.0995\\
1280&1.4657e-02& 1.9983 &4.6954e-01 & 1.9989 &9.3909e-01 & 64.0693 \\
2560&3.6857e-03&1.9916 & 1.1748e-01&    1.9988 & 2.3497e-01&63.7515\\
  \hline
       \end{tabular}
\end{table}

 \begin{table}
 \small
\centering
\caption{Initial  parameters and numerical results  for the adaptive step-size IF midpoint method  for Example \ref{example4}}\label{table4-2}
\renewcommand\arraystretch{1.7}
  \begin{tabular}{ccccccccccc}
    \hline
    Tol& $\eta$ & $k_{max}$  & $Err_{\infty}$ &  $2\mathcal{E}$   & $ei_U$& $Count$ \\
    \hline
    9e-1&2.3717e-02&1.8e-01& 1.4115e-02& 5.1378e-01& 36.3988& 390\\
   1e-1&7.9057e-03&1.2e-01&3.3088e-03&  1.4546e-01& 43.9618&763\\
  1e-2& 2.5000e-03&4e-02& 1.4438e-03& 5.8521e-02&  40.5315& 1253\\
   1e-3& 7.9057e-04&1e-02&3.5124e-04 &1.4442e-02 & 41.1173  &2858\\
    \hline
       \end{tabular}
\end{table}

\begin{figure}[ht]
  \centering
  \begin{subfigure}[c]{0.8\textwidth}
  \includegraphics[width=\textwidth,height=5.4cm]{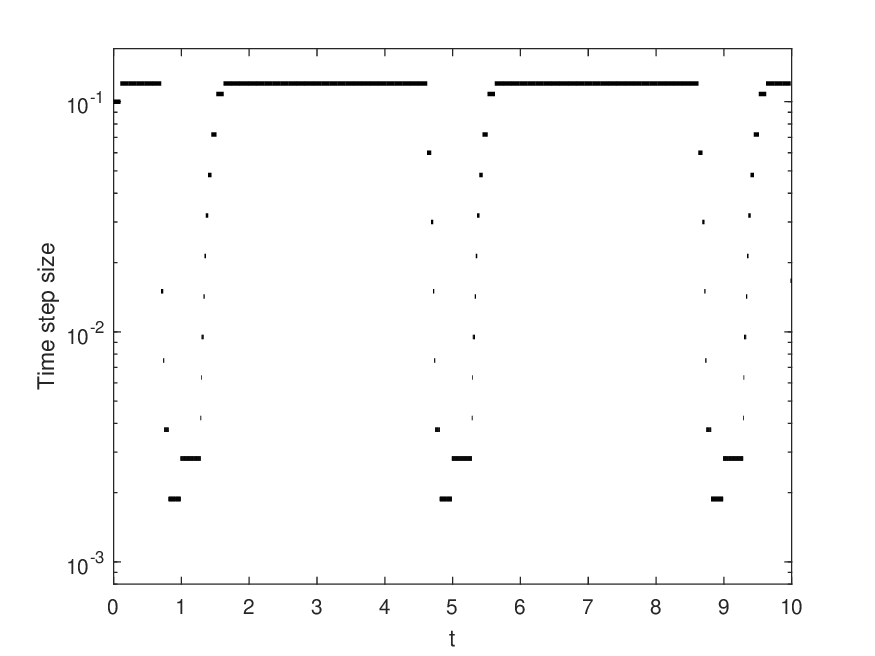}
  \end{subfigure}
      \vspace{0.5cm}
  \begin{subfigure}[c]{0.8\textwidth}
    \includegraphics[width=\textwidth,height=5.4cm]{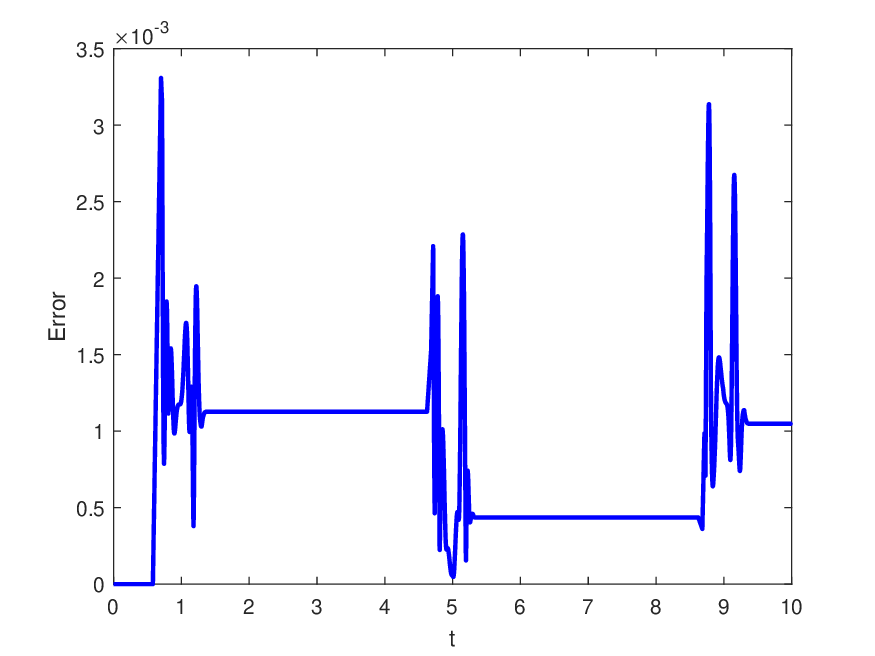}
  \end{subfigure}
    \caption{Example \ref{example4}: Time step-size trajectory (top row) and the error (bottom row) with $Tol=0.1$, $k_0=0.1$, and $k_{\max}=0.12$}\label{fig4.1}
\end{figure}

\section{Conclusions}\label{sec:conclusions}
In this paper, we established the a posteriori error analysis for the IF midpoint method for second order evolution equations. Optimal order residual-based error estimates were derived in the $L^{\infty}$-in-time and energy-in-space norms, using the time reconstruction and the energy technique. An adaptive time-stepping algorithm was developed based on the reliable a posteriori error control.
Finally, numerical results demonstrate convergence rates of the method and the a posteriori quantity, and the high efficiency of the proposed adaptive algorithm.
\section*{Acknowledgments}
We would like to thank the anonymous reviewers for their
insightful comments and suggestions, which helped improve our paper considerably.
\bibliographystyle{siamplain}
\bibliography{references}
\end{document}

%% file: ex_shared.tex
\usepackage{lipsum}
\usepackage{amsfonts}
\usepackage{graphicx}
\usepackage{epstopdf}
\usepackage{algorithmic}
\ifpdf
  \DeclareGraphicsExtensions{.eps,.pdf,.png,.jpg}
\else
  \DeclareGraphicsExtensions{.eps}
\fi

\newsiamremark{remark}{Remark}
\newsiamremark{hypothesis}{Hypothesis}
\crefname{hypothesis}{Hypothesis}{Hypotheses}
\newsiamthm{claim}{Claim}
\newsiamremark{fact}{Fact}
\crefname{fact}{Fact}{Facts}

\headers{An adaptive time-stepping method for linear hyperbolic equations}{Xianfa Hu, Fazhan Geng, and Wansheng Wang}

\title{An adaptive integrating factor midpoint method for second order evolution equations\thanks{Submitted to the editors DATE.
\funding{The first author was partially supported by the Natural Science Foundation of China (Grant Nos. 12271367, 12071419) and  the Natural Science Foundation of Shandong Province, China (Grant No. ZR2024MA056).  The third author is the corresponding author. He was supported by the
Natural Science Foundation of China (Grant No. 12271367), Shanghai Science and Technology Planning Projects (Grant No. 20JC1414200), and Natural Science Foundation of Shanghai (Grant No. 20ZR1441200).}}}

\author{Xianfa Hu\thanks{ School of Mathematics and Statistics, Suzhou University of Technology, Changshu, 215500, People's Republic of China
  (\email{zzxyhxf@163.com}, \email{gengfazhan@sina.com}).} 
   \and Fazhan  Geng\footnotemark[2]
\and Wansheng Wang\thanks{Department of Mathematics, Shanghai Normal University, Shanghai, 200234, People's Republic of China
  (\email{w.s.wang@163.com}).}}

\usepackage{amsopn}


%% file: ex_article.bbl
\begin{thebibliography}{10}

\bibitem{Ainsworth1997}
{\sc M.~Ainsworth and J.~T. Oden}, {\em A posteriori error estimation in finite
  element analysis}, Comput. Methods Appl. Mech. Engrg., 142 (1997), pp.~1--88,
  \url{https://doi.org/10.1016/S0045-7825(96)01107-3}.

\bibitem{Akrivis2010}
{\sc G.~Akrivis and P.~Chatzipantelidis}, {\em A posteriori error estimates for
  the two-step backward differentiation formula method for parabolic
  equations}, SIAM J. Numer. Anal., 48 (2010), pp.~109--132,
  \url{https://doi.org/10.1137/090756995}.

\bibitem{Akrivis2006}
{\sc G.~Akrivis, C.~Makridakis, and R.~H. Nochetto}, {\em A posteriori error
  estimates for the {C}rank-{N}icolson method for parabolic equations}, Math.
  Comp., 75 (2006), pp.~511--531,
  \url{https://doi.org/10.1090/S0025-5718-05-01800-4}.

\bibitem{Akrivis2011}
{\sc G.~Akrivis, C.~Makridakis, and R.~H. Nochetto}, {\em Galerkin and
  {R}unge-{K}utta methods: unified formulation, a posteriori error estimates
  and nodal superconvergence}, Numer. Math., 118 (2011), pp.~429--456,
  \url{https://doi.org/10.1007/s00211-011-0363-6}.

\bibitem{Al-Mohy2011}
{\sc A.~H. Al-Mohy and N.~J. Higham}, {\em Computing the action of the matrix
  exponential, with an application to exponential integrators}, SIAM J. Sci.
  Comput., 33 (2011), pp.~488--511, \url{https://doi.org/10.1137/100788860}.

\bibitem{Babuska1978}
{\sc I.~Babu\v{s}ka and W.~C. Rheinboldt}, {\em Error estimates for adaptive
  finite element computations}, SIAM J. Numer. Anal., 15 (1978), pp.~736--754,
  \url{https://doi.org/10.1137/0715049}.

\bibitem{Bank1985}
{\sc R.~E. Bank and A.~Weiser}, {\em Some a posteriori error estimators for
  elliptic partial differential equations}, Math. Comp., 44 (1985),
  pp.~283--301, \url{https://doi.org/10.2307/2007953}.

\bibitem{Bao2014}
{\sc W.~Bao and Y.~Cai}, {\em Uniform and optimal error estimates of an
  exponential wave integrator sine pseudospectral method for the nonlinear
  {S}chr\"odinger equation with wave operator}, SIAM J. Numer. Anal., 52
  (2014), pp.~1103--1127, \url{https://doi.org/10.1137/120866890}.

\bibitem{Bao2024}
{\sc W.~Bao and C.~Wang}, {\em An explicit and symmetric exponential wave
  integrator for the nonlinear {S}chr\"odinger equation with low regularity
  potential and nonlinearity}, SIAM J. Numer. Anal., 62 (2024), pp.~1901--1928,
  \url{https://doi.org/10.1137/23M1615656}.

\bibitem{Bergermann2024}
{\sc K.~Bergermann and M.~Stoll}, {\em Adaptive rational {K}rylov methods for
  exponential {R}unge-{K}utta integrators}, SIAM J. Matrix Anal. Appl., 45
  (2024), pp.~744--770, \url{https://doi.org/10.1137/23M1559439}.

\bibitem{Berland2005}
{\sc H.~v. Berland, B.~Owren, and B.~r. Skaflestad}, {\em {$B$}-series and
  order conditions for exponential integrators}, SIAM J. Numer. Anal., 43
  (2005), pp.~1715--1727, \url{https://doi.org/10.1137/040612683}.

\bibitem{Calvo2006}
{\sc M.~P. Calvo and C.~Palencia}, {\em A class of explicit multistep
  exponential integrators for semilinear problems}, Numer. Math., 102 (2006),
  pp.~367--381, \url{https://doi.org/10.1007/s00211-005-0627-0}.

\bibitem{Calvo2008}
{\sc M.~P. Calvo and A.~M. Portillo}, {\em Variable step implementation of
  {ETD} methods for semilinear problems}, Appl. Math. Comput., 196 (2008),
  pp.~627--637, \url{https://doi.org/10.1016/j.amc.2007.06.025}.

\bibitem{Cao2025}
{\sc J.~Cao, W.~Wang, and A.~Xiao}, {\em Adaptive fast {$L1-2$} scheme for
  solving time fractional parabolic problems}, Comput. Math. Appl., 179 (2025),
  pp.~59--76, \url{https://doi.org/10.1016/j.camwa.2024.12.003}.

\bibitem{Celledoni2008}
{\sc E.~Celledoni, D.~Cohen, and B.~Owren}, {\em Symmetric exponential
  integrators with an application to the cubic {S}chr\"odinger equation},
  Found. Comput. Math., 8 (2008), pp.~303--317,
  \url{https://doi.org/10.1007/s10208-007-9016-7}.

\bibitem{Celledoni2009}
{\sc E.~Celledoni and B.~K. Kometa}, {\em Semi-{L}agrangian {R}unge-{K}utta
  exponential integrators for convection dominated problems}, J. Sci. Comput.,
  41 (2009), pp.~139--164, \url{https://doi.org/10.1007/s10915-009-9291-3}.

\bibitem{Celledoni2011}
{\sc E.~Celledoni and B.~K. Kometa}, {\em Semi-{L}agrangian multistep
  exponential integrators for index 2 differential-algebraic systems}, J.
  Comput. Phys., 230 (2011), pp.~3413--3429,
  \url{https://doi.org/10.1016/j.jcp.2011.01.036}.

\bibitem{Chen2004}
{\sc Z.~Chen and J.~Feng}, {\em An adaptive finite element algorithm with
  reliable and efficient error control for linear parabolic problems}, Math.
  Comp., 73 (2004), pp.~1167--1193,
  \url{https://doi.org/10.1090/S0025-5718-04-01634-5}.

\bibitem{Cox2002}
{\sc S.~M. Cox and P.~C. Matthews}, {\em Exponential time differencing for
  stiff systems}, J. Comput. Phys., 176 (2002), pp.~430--455,
  \url{https://doi.org/10.1006/jcph.2002.6995}.

\bibitem{Du2021}
{\sc Q.~Du, L.~Ju, X.~Li, and Z.~Qiao}, {\em Maximum bound principles for a
  class of semilinear parabolic equations and exponential time-differencing
  schemes}, SIAM Rev., 63 (2021), pp.~317--359,
  \url{https://doi.org/10.1137/19M1243750}.

\bibitem{Eriksson1995}
{\sc K.~Eriksson, D.~Estep, P.~Hansbo, and C.~Johnson}, {\em Introduction to
  adaptive methods for differential equations}, in Acta numerica, 1995, Acta
  Numer., Cambridge Univ. Press, Cambridge, 1995, pp.~105--158,
  \url{https://doi.org/10.1017/S0962492900002531}.

\bibitem{Ern2017}
{\sc A.~Ern, I.~Smears, and M.~Vohral\'ik}, {\em Guaranteed, locally space-time
  efficient, and polynomial-degree robust a~posteriori error estimates for
  high-order discretizations of parabolic problems}, SIAM J. Numer. Anal., 55
  (2017), pp.~2811--2834, \url{https://doi.org/10.1137/16M1097626}.

\bibitem{Ern2015}
{\sc A.~Ern and M.~Vohral\'ik}, {\em Polynomial-degree-robust a posteriori
  estimates in a unified setting for conforming, nonconforming, discontinuous
  {G}alerkin, and mixed discretizations}, SIAM J. Numer. Anal., 53 (2015),
  pp.~1058--1081, \url{https://doi.org/10.1137/130950100}.

\bibitem{Evans1998}
{\sc L.~C. Evans}, {\em Partial differential equations}, vol.~19 of Graduate
  Studies in Mathematics, American Mathematical Society, Providence, RI, 1998,
  \url{https://doi.org/10.1090/gsm/019}.

\bibitem{Feng2024}
{\sc Y.~Feng and K.~Schratz}, {\em Improved uniform error bounds on a
  {L}awson-type exponential integrator for the long-time dynamics of
  sine-{G}ordon equation}, Numer. Math., 156 (2024), pp.~1455--1477,
  \url{https://doi.org/10.1007/s00211-024-01423-w}.

\bibitem{Gaudreault2018}
{\sc S.~Gaudreault, G.~Rainwater, and M.~Tokman}, {\em K{IOPS}: a fast adaptive
  {K}rylov subspace solver for exponential integrators}, J. Comput. Phys., 372
  (2018), pp.~236--255, \url{https://doi.org/10.1016/j.jcp.2018.06.026}.

\bibitem{Georgoulis2024}
{\sc E.~H. Georgoulis, E.~J.~C. Hall, and C.~G. Makridakis}, {\em On a
  posteriori error estimation for {R}unge-{K}utta discontinuous {G}alerkin
  methods for linear hyperbolic problems}, Stud. Appl. Math., 153 (2024),
  12772, \url{https://doi.org/10.1111/sapm.12772}.

\bibitem{Georgoulis2016}
{\sc E.~H. Georgoulis, O.~Lakkis, C.~G. Makridakis, and J.~M. Virtanen}, {\em A
  posteriori error estimates for leap-frog and cosine methods for second order
  evolution problems}, SIAM J. Numer. Anal., 54 (2016), pp.~120--136,
  \url{https://doi.org/10.1137/140996318}.

\bibitem{Higham2008}
{\sc N.~J. Higham}, {\em Functions of matrices}, Society for Industrial and
  Applied Mathematics (SIAM), Philadelphia, PA, 2008,
  \url{https://doi.org/10.1137/1.9780898717778}.

\bibitem{Hochbruck2020}
{\sc M.~Hochbruck, J.~Leibold, and A.~Ostermann}, {\em On the convergence of
  {L}awson methods for semilinear stiff problems}, Numer. Math., 145 (2020),
  pp.~553--580, \url{https://doi.org/10.1007/s00211-020-01120-4}.

\bibitem{Hochbruck2005}
{\sc M.~Hochbruck and A.~Ostermann}, {\em Explicit exponential {R}unge-{K}utta
  methods for semilinear parabolic problems}, SIAM J. Numer. Anal., 43 (2005),
  pp.~1069--1090, \url{https://doi.org/10.1137/040611434}.

\bibitem{Hochbruck2010}
{\sc M.~Hochbruck and A.~Ostermann}, {\em Exponential integrators}, Acta
  Numer., 19 (2010), pp.~209--286,
  \url{https://doi.org/10.1017/S0962492910000048}.

\bibitem{Hu2025b}
{\sc X.~Hu, W.~Wang, and Y.~Fang}, {\em A {P}osteriori {E}rror {E}stimates for
  {E}xponential {M}idpoint {I}ntegrator {F}inite {E}lement {M}ethod for
  {P}arabolic {E}quations}, Math. Methods Appl. Sci., 48 (2025),
  pp.~5849--5862, \url{https://doi.org/10.1002/mma.10641}.

\bibitem{Hu2025a}
{\sc X.~Hu, W.~Wang, M.~Mao, and J.~Cao}, {\em A posteriori error estimates for
  the exponential midpoint method for linear and semilinear parabolic
  equations}, Numer. Algorithms, 99 (2025), pp.~1983--2010,
  \url{https://doi.org/10.1007/s11075-024-01940-7}.

\bibitem{Huang2013}
{\sc J.~Huang, J.~Lai, and T.~Tang}, {\em An adaptive time stepping method with
  efficient error control for second-order evolution problems}, Sci. China
  Math., 56 (2013), pp.~2753--2771,
  \url{https://doi.org/10.1007/s11425-013-4730-x}.

\bibitem{Isherwood2018}
{\sc L.~Isherwood, Z.~J. Grant, and S.~Gottlieb}, {\em Strong stability
  preserving integrating factor {R}unge-{K}utta methods}, SIAM J. Numer. Anal.,
  56 (2018), pp.~3276--3307, \url{https://doi.org/10.1137/17M1143290}.

\bibitem{Ju2021}
{\sc L.~Ju, X.~Li, Z.~Qiao, and J.~Yang}, {\em Maximum bound principle
  preserving integrating factor {R}unge-{K}utta methods for semilinear
  parabolic equations}, J. Comput. Phys., 439 (2021), 110405,
  \url{https://doi.org/10.1016/j.jcp.2021.110405}.

\bibitem{Kassam2005}
{\sc A.-K. Kassam and L.~N. Trefethen}, {\em Fourth-order time-stepping for
  stiff {PDE}s}, SIAM J. Sci. Comput., 26 (2005), pp.~1214--1233,
  \url{https://doi.org/10.1137/S1064827502410633}.

\bibitem{Krogstad2005}
{\sc S.~Krogstad}, {\em Generalized integrating factor methods for stiff
  {PDE}s}, J. Comput. Phys., 203 (2005), pp.~72--88,
  \url{https://doi.org/10.1016/j.jcp.2004.08.006}.

\bibitem{Lakkis2006}
{\sc O.~Lakkis and C.~Makridakis}, {\em Elliptic reconstruction and a
  posteriori error estimates for fully discrete linear parabolic problems},
  Math. Comp., 75 (2006), pp.~1627--1658,
  \url{https://doi.org/10.1090/S0025-5718-06-01858-8}.

\bibitem{Lawson1967}
{\sc J.~D. Lawson}, {\em Generalized {R}unge-{K}utta processes for stable
  systems with large {L}ipschitz constants}, SIAM J. Numer. Anal., 4 (1967),
  pp.~372--380, \url{https://doi.org/10.1137/0704033}.

\bibitem{Li2020}
{\sc B.~Li, J.~Yang, and Z.~Zhou}, {\em Arbitrarily high-order exponential
  cut-off methods for preserving maximum principle of parabolic equations},
  SIAM J. Sci. Comput., 42 (2020), pp.~A3957--A3978,
  \url{https://doi.org/10.1137/20M1333456}.

\bibitem{Li2021}
{\sc J.~Li, X.~Li, L.~Ju, and X.~Feng}, {\em Stabilized integrating factor
  {R}unge-{K}utta method and unconditional preservation of maximum bound
  principle}, SIAM J. Sci. Comput., 43 (2021), pp.~A1780--A1802,
  \url{https://doi.org/10.1137/20M1340678}.

\bibitem{Li2016}
{\sc Y.-W. Li and X.~Wu}, {\em Exponential integrators preserving first
  integrals or {L}yapunov functions for conservative or dissipative systems},
  SIAM J. Sci. Comput., 38 (2016), pp.~A1876--A1895,
  \url{https://doi.org/10.1137/15M1023257}.

\bibitem{Makridakis2003}
{\sc C.~Makridakis and R.~H. Nochetto}, {\em Elliptic reconstruction and a
  posteriori error estimates for parabolic problems}, SIAM J. Numer. Anal., 41
  (2003), pp.~1585--1594, \url{https://doi.org/10.1137/S0036142902406314}.

\bibitem{Mei2017}
{\sc L.~Mei and X.~Wu}, {\em Symplectic exponential {R}unge-{K}utta methods for
  solving nonlinear {H}amiltonian systems}, J. Comput. Phys., 338 (2017),
  pp.~567--584, \url{https://doi.org/10.1016/j.jcp.2017.03.018}.

\bibitem{Moler2003}
{\sc C.~Moler and C.~Van~Loan}, {\em Nineteen dubious ways to compute the
  exponential of a matrix, twenty-five years later}, SIAM Rev., 45 (2003),
  pp.~3--49, \url{https://doi.org/10.1137/S00361445024180}.

\bibitem{Niesen2012}
{\sc J.~Niesen and W.~M. Wright}, {\em Algorithm 919: a {K}rylov subspace
  algorithm for evaluating the {$\phi$}-functions appearing in exponential
  integrators}, ACM Trans. Math. Software, 38 (2012), 22,
  \url{https://doi.org/10.1145/2168773.2168781}.

\bibitem{Nochetto2000}
{\sc R.~H. Nochetto, G.~Savar\'e, and C.~Verdi}, {\em A posteriori error
  estimates for variable time-step discretizations of nonlinear evolution
  equations}, Comm. Pure Appl. Math., 53 (2000), pp.~525--589,
  \url{https://doi.org/10.1002/(SICI)1097-0312(200005)53:5<525::AID-CPA1>3.0.CO;2-M}.

\bibitem{Ostermann2018}
{\sc A.~Ostermann and K.~Schratz}, {\em Low regularity exponential-type
  integrators for semilinear {S}chr\"odinger equations}, Found. Comput. Math.,
  18 (2018), pp.~731--755, \url{https://doi.org/10.1007/s10208-017-9352-1}.

\bibitem{Ostermann2020}
{\sc A.~Ostermann and C.~Su}, {\em A {L}awson-type exponential integrator for
  the {K}orteweg--de {V}ries equation}, IMA J. Numer. Anal., 40 (2020),
  pp.~2399--2414, \url{https://doi.org/10.1093/imanum/drz030}.

\bibitem{Verfurth1994}
{\sc R.~Verf\"urth}, {\em A posteriori error estimation and adaptive
  mesh-refinement techniques}, in Proceedings of the {F}ifth {I}nternational
  {C}ongress on {C}omputational and {A}pplied {M}athematics ({L}euven, 1992),
  vol.~50, 1994, pp.~67--83,
  \url{https://doi.org/10.1016/0377-0427(94)90290-9}.

\bibitem{Wang2016}
{\sc B.~Wang, A.~Iserles, and X.~Wu}, {\em Arbitrary-order trigonometric
  {F}ourier collocation methods for multi-frequency oscillatory systems},
  Found. Comput. Math., 16 (2016), pp.~151--181,
  \url{https://doi.org/10.1007/s10208-014-9241-9}.

\bibitem{Wang2023}
{\sc B.~Wang and X.~Zhao}, {\em Geometric two-scale integrators for highly
  oscillatory system: uniform accuracy and near conservations}, SIAM J. Numer.
  Anal., 61 (2023), pp.~1246--1277, \url{https://doi.org/10.1137/21M1462908}.

\bibitem{Wang2022a}
{\sc W.~Wang, M.~Mao, and Y.~Huang}, {\em Optimal a posteriori estimators for
  the variable step-size {BDF}2 method for linear parabolic equations}, J.
  Comput. Appl. Math., 413 (2022), 114306,
  \url{https://doi.org/10.1016/j.cam.2022.114306}.

\bibitem{Wang2021}
{\sc W.~Wang, M.~Mao, and Z.~Wang}, {\em Stability and error estimates for the
  variable step-size {BDF}2 method for linear and semilinear parabolic
  equations}, Adv. Comput. Math., 47 (2021), 8,
  \url{https://doi.org/10.1007/s10444-020-09839-2}.

\bibitem{Wang2018}
{\sc W.~Wang, T.~Rao, W.~Shen, and P.~Zhong}, {\em A posteriori error analysis
  for {C}rank-{N}icolson-{G}alerkin type methods for reaction-diffusion
  equations with delay}, SIAM J. Sci. Comput., 40 (2018), pp.~A1095--A1120,
  \url{https://doi.org/10.1137/17M1143514}.

\bibitem{Wang2022b}
{\sc W.~Wang and L.~Yi}, {\em Delay-dependent elliptic reconstruction and
  optimal {$L^\infty(L^2)$} a posteriori error estimates for fully discrete
  delay parabolic problems}, Math. Comp., 91 (2022), 4473098, pp.~2609--2643,
  \url{https://doi.org/10.1090/mcom/3761}.

\bibitem{Wang2020}
{\sc W.~Wang, L.~Yi, and A.~Xiao}, {\em A posteriori error estimates for fully
  discrete finite element method for generalized diffusion equation with
  delay}, J. Sci. Comput., 84 (2020), 13,
  \url{https://doi.org/10.1007/s10915-020-01262-5}.

\end{thebibliography}
